\newtheorem{thm}{Theorem}[section]
\newtheorem{lem}[thm]{Lemma}
\newtheorem{prop}[thm]{Proposition}
\theoremstyle{definition}
\newtheorem{defn}[thm]{Definition}
\theoremstyle{remark}
\newtheorem{rem}[thm]{Remark}
\numberwithin{equation}{section}
\newcommand{\Real}{\mathbb R}
\newcommand{\Nat}{\mathbb N}
\newcommand{\supp}{\rm supp}
\begin{document}

\title[]{Semialgebraic Calder{\'o}n-Zygmund theorem on regularization of the distance function}%
\author{Beata Kocel-Cynk, Wies{\l}aw Paw{\l}ucki and Anna Valette}%

\address{(B. Kocel-Cynk) Instytut Matematyki Politechniki Krakowskiej, ul. Warszawska 24, 31-155 Kraków, Poland}%
\email{beata.kocel-cynk{@}pk.edu.pl}%

\address{(W. Pawłucki) Instytut Matematyki Uniwersytetu Jagiellońskiego, ul. Prof. St. Łojasiewicza 6, 30-048 Kraków, Poland}%
\email{wieslaw.pawlucki{@}im.uj.edu.pl}%

\address{(A. Valette) Katedra Teorii Optymalizacji i Sterowania Uniwersytetu Jagiellońskiego, ul. Prof. St. Łojasiewicza 6, 30-048 Kraków, Poland}%
\email{anna.valette{@}uj.edu.pl}%

\subjclass{Primary 14P20. Secondary 57R35, 14P10, 32B20}%
\keywords{Semialgebraic set, semialgebraic mapping, Nash appro\-xi\-mation, $\Lambda_p$-regular mapping}%

\date{28 November, 2023
$\phantom{......................................................................................................................}$
Anna Valette was partially supported by Narodowe Centrum Nauki (Poland); grant no. 2021/43/B/ST1/02359}
% ----------------------------------------------------------------
\begin{abstract}
We prove that, for any closed semialgebraic subset $W$ of $\Real^n$ and for any positive integer $p$, there exists a Nash function $f:\Real^n\setminus W\longrightarrow (0, \infty)$ which is equivalent to the distance function from $W$ and at the same time it is $\Lambda_p$-regular in the sense that $|D^\alpha f(x)|\leq C d(x, W)^{1- |\alpha|}$, for each $x\in \Real^n\setminus W$ and each $\alpha\in \Nat^n$ such that $1\leq |\alpha|\leq p$, where $C$ is a positive constant. In particular, $f$ is Lipschitz. Some applications of this result are given.
\end{abstract}
\maketitle
% ----------------------------------------------------------------
\section{Introduction}

The Calderón-Zygmund theorem on regularization of the distance function asserts that for any closed subset $W\subset \Real^n$ there exists a $\mathcal C^\infty$-function $f: \Real^n\setminus W\longrightarrow (0, \infty)$ equivalent to the distance function from $W$; i.e. there exists a constant $A > 0$ such that
$$
A^{-1}d(x, W)\leq f(x) \leq Ad(x, W), \quad\text{for each $x\in \Real^n\setminus W$}
$$
and, moreover, there are constants $B_{\alpha} > 0$ \, $(\alpha\in \Nat^n)$, \, such that
$$
|D^\alpha f(x)|\leq B_\alpha(d(x, W))^{1-|\alpha|}, \quad\text{for each $x\in \Real^n\setminus W$ and each $\alpha\in \Nat^n$}.
$$
It was introduced in connection with a study of elliptic partial differential equations (cf. [2]) and appears a useful tool in analysis (cf. [14, Chapter VI]).

\bigskip
Since semialgebraic geometry (cf. [1]) together with its generalizations (subanalytic geometry (cf.[13]), o-minimal geometry (cf.[3])) appears very valuable in areas of applied mathematics such as robotics and CAD, it was an interesting open question if the Calderón-Zygmund theorem has a counterpart in the semialgebraic category. Our aim is to give a positive answer; namely, we prove the following.

\begin{thm}
For any closed semialgebraic subset $W$ of $\Real^n$ and any positive integer $p$, there exists a Nash function $($i.e. semialgebraic and $\mathcal C^\infty$$)$
$f:\Real^n\setminus W\longrightarrow (0, \infty)$ and positive constants $A, B$ such that, for each $x\in \Real^n\setminus W$

\begin{equation}
A^{-1}d(x, W)\leq f(x) \leq Ad(x, W),
\end{equation}
and
\begin{equation}
|D^\alpha f(x)|\leq B(d(x, W))^{1-|\alpha|}, \, \text{where $\alpha\in\Nat^n$ and $|\alpha|\leq p$}.
\end{equation}
\end{thm}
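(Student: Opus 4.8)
The plan is to build $f$ by a semialgebraic partition-of-unity argument mimicking the classical Whitney/Calderón–Zygmund construction, but replacing the smooth bump functions by Nash functions so that semialgebraicity is preserved. First I would reduce to the case where $W \neq \emptyset$ and $W \neq \Real^n$, and set $d(x) = d(x,W)$. The key geometric input is a locally finite semialgebraic cover of $\Real^n \setminus W$ by "Whitney-type" boxes (or balls) $Q_i$ whose diameters are comparable to their distance to $W$; such a decomposition can be produced semialgebraically, since $d$ is a semialgebraic Lipschitz function and the cells can be described by polynomial inequalities involving dyadic-type scales. Over each $Q_i$ one has a Nash approximation $\varphi_i$ of the (semialgebraic but non-smooth) distance function with controlled derivatives — here I would invoke the Nash approximation machinery advertised in the keywords, i.e. approximate $d$ on the relevant scale and use that on $Q_i$ the function $d$ is comparable to the constant $2^{-k_i}$, so that the rescaled derivative bounds $|D^\alpha \varphi_i| \le C\, d^{1-|\alpha|}$ follow automatically from boundedness of derivatives of a fixed Nash model on a fixed box.

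The assembly step is to form $f = \sum_i \theta_i \varphi_i$ where $\{\theta_i\}$ is a Nash partition of unity subordinate to $\{Q_i\}$, each $\theta_i$ supported in a slightly enlarged box $Q_i^*$ still of comparable size, with $|D^\alpha \theta_i| \le C\, 2^{k_i|\alpha|} \le C\, d^{-|\alpha|}$. Because only boundedly many boxes overlap at any point (bounded overlap being a semialgebraic finiteness statement, provable via o-minimal cell decomposition or a direct counting of dyadic neighbors), the Leibniz rule gives
$$
|D^\alpha f(x)| \le \sum_{i : x \in Q_i^*} \sum_{\beta \le \alpha} \binom{\alpha}{\beta} |D^\beta \theta_i(x)|\, |D^{\alpha-\beta}\varphi_i(x)| \le B\, d(x)^{1-|\alpha|},
$$
for $1 \le |\alpha| \le p$, while for $|\alpha|=0$ the convex-combination structure and the estimate $A^{-1} d \le \varphi_i \le A d$ on $Q_i^*$ give (1.1). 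The crucial point making the sum a \emph{Nash} (hence $\mathcal C^\infty$ and semialgebraic) function, rather than merely $\mathcal C^p$, is to choose the $\varphi_i$ and $\theta_i$ from a \emph{single} semialgebraic family and to recognize the sum as semialgebraic: local finiteness makes $f$ locally a finite sum of Nash functions, hence locally Nash, and its graph is semialgebraic, so $f$ is globally Nash.

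The main obstacle I anticipate is the simultaneous demand of \emph{semialgebraicity} and \emph{all-order} $\mathcal C^\infty$ smoothness with the scale-invariant derivative bounds: the classical proof uses a genuinely $\mathcal C^\infty$ bump that is not semialgebraic, and there is no semialgebraic $\mathcal C^\infty$ bump function with compact support in the naive sense. The resolution must be that one never needs compact support in the ambient space — the boxes are open and the partition of unity lives only on $\Real^n \setminus W$, where Nash functions (e.g. built from $d$-comparable semialgebraic quantities composed with suitable algebraic smoothing like $t \mapsto t/\sqrt{1+t^2}$ type expressions, or by applying the paper's own Nash approximation theorem to a smooth-but-not-Nash model) do have the needed localized behavior. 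Making this precise — producing the semialgebraic Whitney decomposition with a uniformly bounded-overlap enlargement, and a subordinate Nash partition of unity with the stated derivative estimates — is the technical heart, and I expect it to rely essentially on an earlier Nash approximation result (approximating a $\mathcal C^p$ or $\mathcal C^\infty$ semialgebraic function, together with finitely many derivatives, uniformly by a Nash function) applied on a normalized box and then transported by the comparability $d \sim 2^{-k_i}$.
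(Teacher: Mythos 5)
Your proposal has two genuine gaps, and the second one shows that the overall strategy cannot be repaired in the form you sketch.

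First, the Whitney-type dyadic cover $\{Q_i\}$ has infinitely many boxes (the scale must go to zero near $W$), while semialgebraicity is an intrinsically \emph{finite} condition. A locally finite but infinite sum $\sum_i \theta_i\varphi_i$ of semialgebraic functions has no reason to have a semialgebraic graph, and your remark that one should ``choose the $\varphi_i$ and $\theta_i$ from a single semialgebraic family'' points in the right direction but is not carried out; in fact no dyadic construction will do it. The paper avoids this by using a \emph{finite} $\Lambda_p$-regular cell stratification $\Real^n\setminus W = C_1\cup\dots\cup C_s$ (Theorem 2.6) and a finite family of neighborhoods $G_\eta(Z_i,W)=\{x: d(x,Z_i)<\eta\,d(x,W)\}$ of suitably chosen subsets $Z_i\subset C_i$, so all sums are finite and manifestly semialgebraic.

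Second, and more fundamentally: a Nash partition of unity $\{\theta_i\}$ with $\operatorname{supp}\theta_i$ contained in a small enlarged box $Q_i^\ast$ \emph{cannot exist}. Nash functions are real-analytic, and a real-analytic function on a connected open set that vanishes on a nonempty open subset vanishes identically; your proposed resolution (``one never needs compact support in the ambient space --- the partition of unity lives only on $\Real^n\setminus W$'') does not help, because the same rigidity holds for Nash functions on any connected component of $\Real^n\setminus W$. So the ``technical heart'' you defer --- a subordinate Nash partition of unity --- is not a difficulty to be overcome but an impossibility. The paper's resolution is structurally different: it never makes the partition of unity Nash. It first proves Theorem 1.3, producing a merely $\mathcal C^p$ (that is, $\Lambda^1_p(W)$-regular) semialgebraic function $f$ close to $d(\cdot,W)$, using a $\Lambda^0_p(W)$-regular, $\mathcal C^p$ partition of unity (Theorem 3.2). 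Only at the very end is the Efroymson--Shiota approximation theorem applied \emph{once, globally}, to replace this $\mathcal C^p$ function by a Nash function close to it in the semialgebraic $\mathcal C^p$-topology; choosing the control function $\varepsilon(x)$ in that topology of size comparable to $d(x,W)^{1-p}$ transfers the estimates (1.1)--(1.2) to the Nash approximant. Your proposal does cite Nash approximation, but locally on each box rather than once at the end; pushing the Nash approximation to the last step is precisely what dissolves the bump-function obstruction, and this is the key idea missing from your sketch.
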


\bigskip
The proof of Theorem 1.1 is based on $\Lambda_p$-regular stratifications (see Section 2) introduced by the second author with Krzysztof Kurdyka in [7], in connection with a subanalytic version of the Whitney extension theorem, combined with a version of the Efroymson-Shiota approximation theorem, cited below (see Theorem 1.4). In fact, we will need the following generalization of the notion of $\Lambda_p$-regular function considered in [7].

\begin{defn}
Let $W\subset \Real^n$ be a closed semialgebraic subset, let $p, k\in \mathbb Z$, where $p > 0$. Let $\varOmega\subset \Real^n$ be an open semialgebraic subset disjoint from $W$. We say that a semialgebraic $\mathcal C^p$-function $f:\varOmega \longrightarrow \Real$ is $\Lambda^k_p(W)$-\emph{regular} if there exists a constant $M > 0$ such that
$$
|D^\alpha f(x)|\leq Md(x, W)^{k-|\alpha|},
$$

for each $x\in \varOmega$ and $\alpha\in \Nat^n$ such that $1\leq |\alpha|\leq p$.
\medskip

When $f$ is $\Lambda^1_p(\partial \varOmega)$-regular we say that $f$ is $\Lambda_p$-\emph{regular} (as in [7]).
\end{defn}

\bigskip
Our main effort in this paper is focused on proving the following approximation theorem for Lipschitz functions (notice that the distance function is a particular case).

\bigskip
\begin{thm}
Let $W\subset \Real^n$ be any closed semialgebraic subset and let $p$ be a positive integer. Let $g:\Real^n\longrightarrow \Real$ be any semialgebraic Lipschitz function vanishing on $W$. \newline Then, for any $\kappa > 0$,  there exists a $\Lambda^1_p(W)$-regular function $f:\Real^n\setminus W\longrightarrow \Real$ such that, for each $x\in \Real^n\setminus W$,
$$
|f(x) - g(x)|\leq \kappa d(x, W).
$$
\end{thm}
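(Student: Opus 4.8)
The plan is to obtain $f$ by patching together local approximations of $g$ over a $\Lambda_p$-regular stratification of $\Real^n$. Throughout write $\rho(x)=d(x,W)$. Since $g$ is, say, $L$-Lipschitz and vanishes on $W$, one has $|g(x)|\le L\rho(x)$ for all $x$; it is therefore enough to produce a semialgebraic $\mathcal C^p$ function $f$ on $\Real^n\setminus W$ with $|D^\alpha f(x)|\le M\rho(x)^{1-|\alpha|}$ for $1\le|\alpha|\le p$ and $|f(x)-g(x)|\le\kappa\rho(x)$ (Definition 1.2 asks only for a semialgebraic $\mathcal C^p$ function, so no extra smoothing is needed here).

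First I would invoke the $\Lambda_p$-regular stratification machinery of Section 2 (following [7], in the semialgebraic category) to get a finite stratification $\mathcal S$ of $\Real^n$, compatible with $W$, fine enough that for every stratum $S\subset\Real^n\setminus W$ the restriction $g|_S$ is $\Lambda^1_p(W)$-regular on $S$, and equipped, for each such $S$, with an open semialgebraic tube $U_S$ about $S$ and a $\Lambda_p$-regular retraction $\pi_S\colon U_S\to S$; after shrinking, the tubes may be taken of width $\le\eps\rho$, with $\bigcup_S U_S=\Real^n\setminus W$ and of bounded multiplicity. On $U_S$ put $g_S:=g\circ\pi_S$. From $|x-\pi_S(x)|\le\eps\rho(x)$ one gets $|g_S(x)-g(x)|\le L\eps\rho(x)$, and also $\tfrac12\rho(x)\le\rho(\pi_S(x))\le\tfrac32\rho(x)$; a Fa\`a di Bruno computation — the orders of the factors adding up correctly because $g|_S$ and $\pi_S$ are $\Lambda^1_p(W)$-regular — then yields $|D^\alpha g_S(x)|\le M'\rho(x)^{1-|\alpha|}$ for $1\le|\alpha|\le p$ and $|g_S(x)|\le(L+\eps L)\rho(x)$ on $U_S$. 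Taking $\eps=\kappa/L$ gives $|g_S-g|\le\kappa\rho$.

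Next I would fix a semialgebraic $\mathcal C^p$ partition of unity $\{\varphi_S\}$ subordinate to the cover $\{U_S\}$ of $\Real^n\setminus W$ with the $\Lambda_p$-estimate $|D^\alpha\varphi_S(x)|\le C\rho(x)^{-|\alpha|}$ for $1\le|\alpha|\le p$ (and $|\varphi_S|\le 1$) — built by composing the $\Lambda_p$-regular functions that cut out the $U_S$ with a fixed semialgebraic $\mathcal C^p$ bump function and normalizing, the quotient preserving the estimates — and set $f:=\sum_S\varphi_S\,g_S$ on $\Real^n\setminus W$. Then $|f-g|=\bigl|\sum_S\varphi_S(g_S-g)\bigr|\le\kappa\rho$. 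For the derivative bounds write $D^\alpha f=\sum_S\sum_{\beta\le\alpha}\binom\alpha\beta D^\beta\varphi_S\,D^{\alpha-\beta}g_S$; at a fixed $x$ only boundedly many summands survive, and each is estimated, when $\beta\neq\alpha$, by $|D^\beta\varphi_S(x)|\,|D^{\alpha-\beta}g_S(x)|\le C\rho(x)^{-|\beta|}\cdot M'\rho(x)^{1-|\alpha-\beta|}=CM'\rho(x)^{1-|\alpha|}$, and when $\beta=\alpha$ by $|D^\alpha\varphi_S(x)|\,|g_S(x)|\le C\rho(x)^{-|\alpha|}\cdot(L+\kappa)\rho(x)$. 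Summing gives $|D^\alpha f(x)|\le M\rho(x)^{1-|\alpha|}$, so $f$ is $\Lambda^1_p(W)$-regular and $\kappa$-close to $g$, as required.

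The step I expect to be the real obstacle is the first one: extracting from the $\Lambda_p$-regular stratification machinery exactly the package needed — a stratification, compatible with $W$, on whose strata $g$ becomes not merely $\mathcal C^p$ but genuinely $\Lambda^1_p(W)$-regular, together with mutually compatible, bounded-multiplicity tubes of width $O(\rho)$ carrying $\Lambda_p$-regular retractions and a subordinate partition of unity with $\Lambda_p$-derivative control. This is where the global Lipschitz hypothesis on $g$ must be used, to force the correct decay of the derivatives of $g|_S$ both near $W$ and near infinity; once it is in place, the patching and the Leibniz/Fa\`a di Bruno estimates above are routine.
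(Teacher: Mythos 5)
Your overall strategy --- stratify $\Real^n\setminus W$ into $\Lambda_p$-regular cells, approximate $g$ locally by pulling it back along the graph projection onto each cell, and patch with a partition of unity whose derivatives decay like $d(\cdot,W)^{-|\alpha|}$ --- is exactly the paper's. But the two ingredients you treat as available are precisely where the paper does almost all of its work, and one of them is false as stated.

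First, you ask for a stratification so fine that $g|_S$ is $\Lambda^1_p(W)$-regular on every stratum $S$. This cannot be arranged. What Theorem 2.6 actually delivers is that the pullback $u\mapsto g(u,\varphi_S(u))$ is $\Lambda_p$-regular in the \emph{intrinsic} sense: its derivatives are controlled by $d(u,\partial T)^{1-|\alpha|}$, where $T$ is the base cell, not by $d(x,W)^{1-|\alpha|}$. Near the part of $\partial S$ lying in $\Real^n\setminus W$ --- lower-dimensional strata that may be far from $W$ --- these derivatives blow up, and no refinement of the stratification removes the blow-up. The paper's remedy is the inductive construction of the shrunken sets $Z_i\subset C_i$ (Section 4): $Z_{i+1}$ is $C_{i+1}$ with a conical $G_\eta$-neighborhood of the earlier $Z_j$'s removed, so on a thin tube around $Z_{i+1}$ one has $d(x,\partial C_{i+1})\gtrsim d(x,W)$ (this is the inequality chain after (4.7)), and only then does the intrinsic $\Lambda_p$-estimate translate into a $\Lambda^1_p(W)$-estimate for $f_{i+1}(u,w)=g(u,\varphi_{i+1}(u))$. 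Near $\partial C_{i+1}\setminus W$ one instead falls back on the earlier $f_j$'s, which is what the covering property (4.4) and the ordering $\dim C_1\le\cdots\le\dim C_s$ encode, and this bookkeeping has to be done explicitly. Your proposal has no analogue of the $Z_i$'s and so never faces this difficulty.

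Second, you posit a partition of unity subordinate to tubes of width $O(d(\cdot,W))$ with $|D^\alpha\varphi_S|\le C\,d(\cdot,W)^{-|\alpha|}$, described as ``built by composing the $\Lambda_p$-regular functions that cut out the $U_S$ with a bump and normalizing.'' This is the $\Lambda^0_p(W)$-partition of unity, Theorem 3.2 of the paper, and it does not come from any such one-line normalization: the sets one must cut off are not sublevel sets of $\Lambda_p$-regular functions in any visible way. The paper obtains it by an induction on the dimension of the set $Z$ being cut off, building the cutoff on a graph cell from the explicit expression $P\big(|w-\varphi(u)|^2/\gamma\,d((u,\varphi(u)),W)^2\big)$, gluing in the inductively produced cutoff for $\partial Z\setminus W$, and making a delicate choice of $\gamma$. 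You correctly identify this as the obstacle, but since you neither supply it nor cite a substitute, the proposal is genuinely incomplete there.

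Granting both ingredients, your final patching and Leibniz estimates are correct and coincide with the last paragraph of the paper's Section 4.
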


\bigskip
The proof of Theorem 1.3 is based on a special $\Lambda^0_p(W)$ partition of unity, which we establish in Section 3, and we believe has its own interest. It is also worth noting that Theorem 1.3 with the given proof holds true in the setting of any o-minimal structure on the field of real numbers $\Real$.

\bigskip
We will use a special case of the Efroymson-Shiota approximation theorem. To quote this, we first recall the definition of the \emph{semialgebraic $\mathcal C^p$-topology}. Let $G$ and $H$ be open semialgebraic subsets in $\Real^n$ and in $\Real^m$, respectively. Let $p$ be a non-negative integer.
Denote by $\mathcal N^p(G, H)$ the set of all semialgebraic $\mathcal C^p$-mappings from $G$ to $H$; i.e. $\mathcal C^p$-mappings with semialgebraic
graphs. Let $f\in \mathcal N^p(G, H)$. Then basic neighborhoods of $f$ in $\mathcal N^p(G, H)$ in the semialgebraic $\mathcal C^p$-topology are of the form
$$
U_\varepsilon(f) = \{h\in \mathcal N^p(G, H): \, |D^\alpha f(x) - D^\alpha h(x)|\leq \varepsilon(x), \phantom{..................}
$$
$\phantom{........................................}$ whenever $\alpha\in \Nat^n$, $|\alpha|\leq p$ and $x\in G$\},
$\phantom{...........................................................................................................................}$
where $\varepsilon: G\longrightarrow (0, \infty)$ is any semialgebraic positive continuous fun\-ction on $G$.

\medskip
\begin{thm}[Efroymson-Shiota approximation theorem]
$\phantom{...................................................................................................................}$

\medskip
Nash mappings $($i.e. semialgebraic and $\mathcal C^\infty$$)$ from $G$ to $H$ are dense in $\mathcal N^p(G, H)$ in the semialgebraic $\mathcal C^p$-topology.
\end{thm}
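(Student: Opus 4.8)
\medskip

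Theorem 1.4 is a known result: the $\mathcal C^0$ case goes back to Efroymson, and the refinement to the semialgebraic $\mathcal C^p$-topology, with the error controlled by an arbitrary positive continuous semialgebraic function $\varepsilon$, is due to Shiota (see also Coste--Ruiz--Shiota and the monograph of Bochnak--Coste--Roy). Rather than reproducing the argument, I indicate the line of proof I would follow and where the real difficulty sits.

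The preliminary reductions are routine. First, it suffices to treat a single function ($m=1$): a componentwise $\mathcal C^p$-approximation of $f$ by Nash functions automatically keeps its image inside the open set $H$ once $\varepsilon$ is taken below a positive continuous semialgebraic lower bound for $x\mapsto d(f(x),\Real^m\setminus H)$, and such a bound exists by the {\L}ojasiewicz inequality. Second, multiplying by the Nash function $x\mapsto(1+\abs{x}^2)^{-1}$ one may assume the data are bounded together with their derivatives up to order $p$. Then comes the one genuinely algebraic ingredient, which is local: a semialgebraic $\mathcal C^p$-function $f$ on $G$ satisfies a nontrivial identity $P(x,f(x))\equiv 0$ with $P$ a polynomial squarefree in the last variable; the resultant $R=\mathrm{Res}_y(P,\partial P/\partial y)$ is then a nonzero polynomial, and on the complement of the hypersurface $\{R=0\}$ the value $f(x)$ is a simple root of $P(x,\cdot)$, so the analytic implicit function theorem exhibits $f$ locally as a Nash function there. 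Hence the only obstruction to $f$ being globally Nash is concentrated on the lower-dimensional semialgebraic set $Z=\{R=0\}\cap G$.

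The crux --- the step I expect to be the main obstacle --- is the globalization, since there are no Nash partitions of unity (a Nash function vanishing on a nonempty open set vanishes on its whole connected component), so one cannot simply glue the local Nash models. The plan is to induct on $\dim Z$: using the {\L}ojasiewicz inequality one builds a small semialgebraic $\mathcal C^p$-perturbation of $f$ that agrees with a local Nash model to high order away from $Z$ and is suitably flattened near $Z$, arranged so that the new obstruction set has strictly smaller dimension; after finitely many steps it is empty and the function is Nash. The delicate points are to keep all derivatives up to order $p$ of the successive perturbations under control in shrinking neighbourhoods of $Z$, where the {\L}ojasiewicz exponents govern the blow-up, without creating new bad behaviour, and to make the finitely many error contributions telescope below the prescribed $\varepsilon(x)$. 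A more recent route that bypasses this quantitative induction is to invoke N\'eron--Popescu desingularization --- the ring of Nash functions on an affine Nash manifold is a filtered colimit of smooth $\Real$-algebras of finite type --- which yields an Artin-type approximation statement in the Nash category; globalizing it to the semialgebraic $\mathcal C^p$-topology still requires some sheaf-theoretic bookkeeping.
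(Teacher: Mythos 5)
The paper does not prove Theorem 1.4 at all: it is stated as a known black-box result, with $p=0$ attributed to Efroymson [4] (see also Pecker [11]) and the general $p$ case to Shiota [12], whose formulation is noted to be even stronger (allowing $G$ and $H$ to be arbitrary embedded Nash submanifolds). Your proposal correctly recognizes this as a cited theorem rather than something to be proved in the paper, and gives the right attributions, so on that point you and the authors agree.

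Your additional sketch of the proof strategy is offered beyond what the paper supplies, so there is nothing in the paper to check it against; still, a couple of remarks. Your reduction to $m=1$ and the observation that there are no Nash partitions of unity (a Nash function vanishing on an open set vanishes on the whole component) correctly identify the globalization as the crux, and the idea of inducting on the dimension of the singular locus of the algebraic relation $P(x,f(x))=0$ is in the spirit of the Efroymson--Shiota argument. Two caveats: the specific bounding device you propose, multiplying by $(1+\abs{x}^2)^{-1}$, does not obviously preserve the quantity one needs to approximate and reintroduce (one must keep track of the fixed positive semialgebraic tolerance $\varepsilon$, not just boundedness), and the claim that N\'eron--Popescu desingularization alone yields the semialgebraic $\mathcal C^p$-topology statement hides essentially the same quantitative globalization work, as you yourself concede. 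None of this affects the paper, which uses Theorem 1.4 only as an input; if you want a verifiable reference you should simply cite Shiota [12] (or Shiota's monograph) rather than reconstruct the argument.
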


\medskip
This deep result originating in the paper of Efroymson [4] for $p = 0$ (compare also [11]), was completed and generalized, for any non-negative $p$, by Shiota in [12]. In fact, Shiota's formulation is stronger than Theorem 1.4; namely, the sets $G$ and $H$ above can be any Nash submanifolds embedded in $\Real^n$ and in $\Real^m$, respectively.

\medskip
It is now a simple matter to see that Theorem 1.1 is a consequence of first applying Theorem 1.3, where we put $g(x) = d(x, W)$, followed by Theorem 1.4, applied to a resulting $f$. Hence, the rest of our paper is devoted to proving Theorem 1.3.

\bigskip
\section{$\Lambda_p$-regular cells}

We recall after [7] (see also [8], [9] and [10]), the definition of $\Lambda_p$-regular cells in $\Real^n$.

\medskip
\begin{defn}
$\phantom{.......................................................................................................................}$

\medskip
Let $p$ be a positive integer. We say that $S$ is an \emph{open $\Lambda_p$-regular cell} in $\Real^n$ if
\begin{equation}
\text{$S$ is any open interval in $\Real$, when $n = 1$;}
\end{equation}
\begin{equation}
S = \{(x', x_n): x'\in T, \psi_1(x') < x_n < \psi_2(x')\},
\end{equation}
where $x' = (x_1,\dots, x_{n-1})$, $T$ is an open $\Lambda_p$-regular cell in $\Real^{n-1}$ and every $\psi_i$ \, $(i\in \{1, 2\})$ \, is either a semialgebraic $\Lambda_p$-regular function on $T$ (see Definition 1.2) with values in $\Real$, or identically equal to $-\infty$, or identically equal to $+\infty$, and $\psi_1(x') < \psi_2(x')$, for each $x'\in T$, when $n > 1$.

\medskip
Extending the above definition, we say that $S$ is an \emph{$m$-dimensional $\Lambda_p$-regular cell} in $\Real^n$, where $m\in \{0,\dots, n-1\}$, if
\begin{equation}
S = \{(u, w): \, u\in T, w = \varphi(u)\},
\end{equation}
where $u = (x_1,\dots, x_m)$, $w = (x_{m+1},\dots, x_n)$, \, $T$ is an open $\Lambda_p$-regular cell in $\Real^m$, and $\varphi: T\longrightarrow \Real^{n-m}$ is a semialgebraic $\Lambda_p$-regular mapping.
\end{defn}

\medskip
\begin{rem} One easily checks by induction that every $\Lambda_p$-regular cell is Lipschitz in the sense that each of the functions $\psi_i$ in (2.2), if finite, as well as the mapping $\varphi$ in (2.3), are Lipschitz. Besides, every $\Lambda_p$-regular cell in $\Real^n$ is a semialgebraic connected $\mathcal C^p$-submanifold of $\Real^n$.
\end{rem}

\begin{defn}
$\phantom{...........................................................................................................................}$

\medskip
Let us recall that a (\emph{semialgebraic}) $\mathcal C^p$-\emph{stratification} of a (semialgebraic) subset $E$ of $\Real^n$ is a finite decomposition $\mathcal S$ of $E$ into (semialgebraic) connected $\mathcal C^p$-submanifolds of $\Real^n$, called \emph{strata}, such that for each stratum $S\in \mathcal S$, its \emph{boundary} in $E$; i.e. $\partial_ES := (\overline{S}\setminus S)\cap E$ is the union of some strata of dimensions $< \dim S$. If $A_1,\dots, A_k$ \, $(k\in \Nat)$ \, are subsets of $E$, we call a stratification $\mathcal S$ \emph{compatible with the subsets $A_1,\dots, A_k$}, if each $A_j$ is a union of some strata.
\end{defn}

\medskip
The following proposition is crucial in the proof of Theorem 2.6 below, which is a fundamental theorem on $\Lambda_p$-stratifications.

\medskip
\begin{prop}[\lbrack 8, Corollary to Proposition 4 \rbrack]
Let $\varPhi:\varOmega\longrightarrow \Real$ be a semialgebraic $\mathcal C^1$-function defined on a semialgebraic open subset $\varOmega$ of $\Real^n$ such that
$$
\Big|\frac{\partial \varPhi}{\partial x_j}\Big|\leq M \qquad (j\in \{1,\dots, n\}),
$$
where $M$ is a positive constant, and let $p$ be a positive integer. Then there exists a closed semialgebraic nowhere dense subset $Z$ of $\varOmega$ such that $\Phi$ is of class $\mathcal C^p$ on $\varOmega\setminus Z$ and
$$
|D^\alpha \varPhi(u)|\leq C(n,p)Md(u, Z\cup\partial \Omega),
$$
whenever $u\in \varOmega\setminus Z$, $\alpha\in \Nat^n$, $1\leq|\alpha|\leq p$, and where $C(n, p)$ is a positive integer depending only on $n$ and $p$.
\end{prop}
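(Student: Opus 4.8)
The plan is to prove the statement by induction on $p$, the case $p=1$ being trivial (take $Z=\emptyset$ and note $|D^\alpha\varPhi|\leq M$ while $d(u,\partial\varOmega)$ may be large, so one has to be slightly careful: actually for $|\alpha|=1$ the bound $|D^\alpha\varPhi(u)|\le M$ is what is assumed, and the claimed inequality $|D^\alpha\varPhi(u)|\le C M\,d(u,Z\cup\partial\varOmega)^{1-|\alpha|}=CM$ holds with $C(n,1)=1$; here I am reading the exponent $1-|\alpha|$ into the displayed inequality, as the statement is the $k=1$ instance of $\Lambda^1_p$-regularity). For the inductive step, assume the result for $p-1$. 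First I would apply the cell decomposition theorem in the semialgebraic category to the function $\varPhi$ together with all of its first partial derivatives $\partial\varPhi/\partial x_j$: since $\varPhi$ is $\mathcal C^1$, there is a finite semialgebraic stratification of $\varOmega$ compatible with the locus where $\varPhi$ fails to be $\mathcal C^p$, whose strata of full dimension $n$ are open $\Lambda_{p}$-regular cells (this is exactly the content of the $\Lambda_p$-regular cell decomposition theory of [7], [8] that the paper is about to invoke in Theorem 2.6 — but to avoid circularity one should instead use only the more elementary fact that $\varPhi$ is $\mathcal C^p$ off a closed nowhere dense semialgebraic set $Z_0$, which is standard). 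Set $Z$ to be (the closure of) this bad set $Z_0$; it is closed, semialgebraic, nowhere dense, and $\varPhi\in\mathcal C^p(\varOmega\setminus Z)$.

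The heart of the matter is the derivative estimate on $\varOmega\setminus Z$. Fix $u\in\varOmega\setminus Z$ and let $r=\tfrac12 d(u, Z\cup\partial\varOmega)$, so the open ball $B(u,2r)$ is contained in $\varOmega\setminus Z$, where $\varPhi$ is $\mathcal C^p$ with a gradient bounded by $M$. The key estimate to establish is: if $h:B(u,2r)\to\Real$ is $\mathcal C^p$ and $|\partial h/\partial x_j|\le M$ throughout, then $|D^\alpha h(u)|\le C(n,p)M r^{1-|\alpha|}$ for $1\le|\alpha|\le p$. For $|\alpha|=1$ this is the hypothesis. For $|\alpha|\ge2$, apply the inductive hypothesis (or rather an interior-estimate version of it): each first derivative $g_j:=\partial h/\partial x_j$ is $\mathcal C^{p-1}$ and bounded by $M$ on $B(u,2r)$; one needs a bound on $|D^\beta g_j(u)|$ with $|\beta|=|\alpha|-1$. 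The natural tool is to combine (i) the $\mathcal C^0$ bound $|g_j|\le M$ with (ii) a Landau–Kolmogorov type interpolation inequality on a ball — the derivative of order $|\beta|$ of a $\mathcal C^{|\beta|}$ function is controlled by the sup of the function divided by $r^{|\beta|}$, times the sup of the top derivative times $r^{0}$, via an iterated application of the one-variable estimate $\|h'\|\lesssim \|h\|/\rho + \rho\|h''\|$. Carefully iterated, rescaling $B(u,2r)$ to the unit ball, this yields the claimed $r^{1-|\alpha|}$ scaling with a dimensional constant $C(n,p)$. Since $2r=d(u,Z\cup\partial\varOmega)$, absorbing the factor $2^{|\alpha|-1}$ into $C(n,p)$ gives the proposition. (The semialgebraicity of $\varPhi$ is not actually needed for the estimate itself — only to know that $Z$ can be taken semialgebraic — so one may prefer to phrase the estimate as a purely analytic lemma and then invoke semialgebraic cell decomposition to produce $Z$.)

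The main obstacle I anticipate is making the interpolation/scaling argument genuinely produce a constant depending \emph{only} on $n$ and $p$, and getting the exponent bookkeeping right: one must be sure that the ball of radius $2r$ (not just some smaller ball) is available for the interpolation, which is why $r$ must be taken as \emph{half} the distance to the bad set, and one must check that the iterated one-dimensional Landau–Kolmogorov inequalities compose into an $n$-variable inequality without the constant blowing up with the number of iterations in a way that depends on $u$ or on $\varPhi$. A secondary point requiring care is the reduction to the case where $\varPhi$ is already $\mathcal C^p$: the cited source [8] presumably handles this by an inductive use of semialgebraic cell decomposition, peeling off one nowhere-dense set at a time, and one should make sure the finitely many such sets can be amalgamated into a single closed nowhere-dense semialgebraic $Z$ — which is automatic since a finite union of closed nowhere-dense semialgebraic sets is again one.
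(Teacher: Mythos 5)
First, a caveat on the target: the paper does not prove this proposition; it is quoted verbatim from [8, Corollary to Proposition 4], so there is no internal proof to compare against. (Note also that the displayed inequality in the paper appears to be missing the exponent $1-|\alpha|$ on $d(u,Z\cup\partial\varOmega)$; you correctly supplied it.) I will therefore evaluate your argument on its own terms.

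There is a genuine gap, and it is exactly at the step you single out as ``the heart of the matter.'' You claim, as a purely analytic lemma, that if $h$ is $\mathcal C^p$ on $B(u,2r)$ with $|\partial h/\partial x_j|\le M$, then $|D^\alpha h(u)|\le C(n,p)\,M\,r^{1-|\alpha|}$ for $1\le|\alpha|\le p$, and you explicitly assert that semialgebraicity is not needed for the estimate. This is false. Take $n=1$, $u=0$, $r=1/2$, $\tau>0$ small, and $h(x)=M\tau\sin(x/\tau)$ on $(-1,1)$: then $|h'|\le M$ everywhere, yet $|h''(0)|$ can be made $0$ at $0$ but $\sup|h''|=M/\tau$ is unbounded, and shifting the phase ($h(x)=M\tau\cos(x/\tau)$) makes $|h''(0)|=M/\tau\to\infty$. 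So no bound of the form $C(n,p)Mr^{-1}$ on $|h''(u)|$ can hold for arbitrary $\mathcal C^p$ functions with bounded gradient. The Landau--Kolmogorov inequality you invoke, $\|h'\|\lesssim\|h\|/\rho+\rho\|h''\|$, bounds the \emph{intermediate} derivative in terms of the two \emph{outer} ones; it cannot be reversed to control $\|h''\|$ from $\|h\|$ and $\|h'\|$. Your induction on $p$ therefore does not close: to bound $D^\beta g_j(u)$ with $g_j=\partial h/\partial x_j$ via the inductive hypothesis you would need a bound on the \emph{first} derivatives of $g_j$, i.e.\ on the second derivatives of $h$, which is exactly what is not available.

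The missing ingredient is precisely the semialgebraic (more generally o-minimal) structure, and it enters the estimate itself, not merely the choice of $Z$. The mechanism in [8] is that one constructs $Z$ by a cell decomposition compatible with $\varPhi$ and its partial derivatives, and the key one-variable input is that a semialgebraic function of one variable is piecewise monotone together with all its derivatives, so that on a component $(a,b)$ of the complement of the bad set one has a bound of Łojasiewicz type $|\varPhi^{(k)}(t)|\le C\,|\varPhi'|_\infty\,\min(t-a,b-t)^{1-k}$; this is then propagated to $n$ variables by the inductive cell-by-cell argument. Without this rigidity (which eliminates the oscillatory counterexample above) the scaling in $r^{1-|\alpha|}$ is simply not a theorem. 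The remainder of your outline — producing a closed nowhere-dense semialgebraic $Z_0$ off which $\varPhi$ is $\mathcal C^p$, amalgamating finitely many such sets, and taking $r=\tfrac12 d(u,Z\cup\partial\varOmega)$ — is fine, but it is the easy part.
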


\medskip
\begin{rem}
If $\varPhi:\varOmega \longrightarrow \Real$ is a semialgebraic Lipschitz function with a constant $M$ defined on a semialgebraic open subset $\varOmega$ of $\Real^n$, then there exists a closed semialgebraic nowhere dense subset $Z'$ of $\varOmega$ such that $\varphi$ is of class $\mathcal C^1$ on $\varOmega\setminus Z'$ and
$$
\Big|\frac{\partial \varPhi}{\partial x_j}\Big|\leq M \quad\text{on}\quad \varOmega\setminus Z'.
$$
\end{rem}

\begin{thm}
Let $p$ be a positive integer. Given any finite number $A_1,\dots, A_k$ of semialgebraic subsets of a semialgebraic subset $E$ of $\Real^n$, and a semialgebraic Lipschitz mapping
$g: E\longrightarrow \Real^d$, where $d\in \Nat$, there exists a semialgebraic $\mathcal C^p$-stratification $\mathcal S$ of $E$ compatible with sets $A_1,\dots, A_k$ and such that every stratum $S\in \mathcal S$, after an orthogonal linear change of coordinates\footnote{By [10], permutations of coordinates $x_1,\dots, x_n$ suffice.}  in $\Real^n$, is a $\Lambda_p$-regular cell in $\Real^n$ and if $S$ is open then $g|S$ is $\Lambda_p$-regular, while in the case $\dim S = m < n$, when $S$ is of the form $(2.3)$,
\begin{equation}
\text{the mapping $T\ni u\longmapsto g(u, \varphi(u))\in \Real^d$ is $\Lambda_p$-regular.}
\end{equation}
\end{thm}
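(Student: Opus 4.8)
The plan is to argue by induction on the dimension $n$ of the ambient space, following the classical cell-decomposition scheme but upgrading each step to control derivatives via Proposition 2.4 and Remark 2.5. For $n=1$ the statement is essentially trivial: decompose $\Real$ (or the relevant subset $E\subset\Real$) into finitely many points and open intervals compatible with $A_1,\dots,A_k$ and with the finitely many points where $g$ fails to be $\mathcal C^p$; on each open interval $g$ is a semialgebraic $\mathcal C^p$-function of one variable, and after shrinking, its derivatives satisfy the required bounds because a bounded semialgebraic function on an interval whose endpoints are the relevant "bad set" has, by Proposition 2.4, the $\Lambda_p$ estimate. For the inductive step, suppose the theorem holds in dimension $< n$. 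First I would reduce to the case where $E$ is open in $\Real^n$: a general semialgebraic $E$ is a finite union of semialgebraic $\mathcal C^p$-cells, and each lower-dimensional cell is, up to an orthogonal change of coordinates, a graph $w=\varphi(u)$ over an open set $T\subset\Real^m$ with $m<n$; applying the inductive hypothesis in $\Real^m$ to $T$, to the traces of the $A_j$, and to the composed map $u\mapsto (\varphi(u), g(u,\varphi(u)))$ — together with the finitely many $A$'s needed to keep track of $\varphi$ — yields a $\Lambda_p$-stratification of $T$, hence of $S$, with property (2.7). So it remains to stratify an open semialgebraic $E\subset\Real^n$.

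For open $E$, I would proceed as follows. By Remark 2.5 applied to each coordinate of the (locally) Lipschitz map $g$ and by a preliminary semialgebraic cell decomposition, we may remove a closed semialgebraic nowhere-dense set so that $g$ is $\mathcal C^1$ on the complement, with bounded first-order partials. Next apply Proposition 2.4 to each component $g_i$: there is a closed semialgebraic nowhere-dense $Z_i\subset E$ such that $g_i$ is $\mathcal C^p$ off $Z_i$ with $|D^\alpha g_i(x)|\le C(n,p)M\, d(x, Z_i\cup\partial E)^{1-|\alpha|}$ for $1\le|\alpha|\le p$. Put $Z$ equal to the union of the $Z_i$, together with the frontiers of the $A_j$, together with the set where some coordinate function that will serve to define the cell walls is badly behaved; $Z$ is closed, semialgebraic, nowhere dense in $E$. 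Now decompose $E\setminus Z$ into finitely many open connected semialgebraic pieces; on each such piece $g$ is $\mathcal C^p$ with the desired $\Lambda^1_p$ bounds relative to the piece's own boundary (since $d(x,\partial(\text{piece}))\le d(x,Z\cup\partial E)$), which is exactly $\Lambda_p$-regularity of $g$ restricted to an open cell — provided each piece can be brought, by a permutation of coordinates, into the normal form (2.2) with $\Lambda_p$-regular walls $\psi_1<\psi_2$ over a $\Lambda_p$-regular cell $T\subset\Real^{n-1}$. To get that normal form, project a given open piece to $\Real^{n-1}$, decompose the image by the inductive hypothesis into $\Lambda_p$-regular cells (open ones and graphs), refine so that over each open base cell $T$ the piece is a "band" between two semialgebraic sections $\psi_1,\psi_2$, and apply Proposition 2.4 once more — now in $n-1$ variables — to $\psi_1$ and $\psi_2$ to make them $\Lambda_p$-regular over (a refinement of) $T$. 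Handle the lower-dimensional strata thus created over the frontiers of these base cells by the reduction of the previous paragraph. Finally, one checks that this whole construction can be carried out compatibly: intersecting with the $A_j$ at the start and tracking them through each projection ensures $\mathcal S$ is compatible with $A_1,\dots,A_k$, and the frontier condition $\partial_E S = \overline S\setminus S$ being a union of lower strata is inherited from the classical cell decomposition.

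The main obstacle, and the place where real care is needed, is the interplay between the two uses of Proposition 2.4 — once for $g$ in $n$ variables and once for the cell walls $\psi_1,\psi_2$ in $n-1$ variables — and the fact that each invocation \emph{enlarges} the bad set $Z$, forcing further refinement, which in turn may destroy normal form achieved earlier. One must organize the induction so that the process terminates: the key point is that after projecting and applying the inductive hypothesis in $\Real^{n-1}$, the base cells $T$ are already $\Lambda_p$-regular, so applying Proposition 2.4 to $\psi_1,\psi_2$ on $T$ produces a \emph{nowhere-dense} exceptional subset of $T$, and cutting $T$ along it yields finitely many new $\Lambda_p$-regular subcells (again by the inductive hypothesis, applied to the pair $(\psi_1,\psi_2)$ as the Lipschitz data) — no infinite regress occurs because the dimension of the base strictly drops at each nesting level. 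A second delicate point is that the estimate in Proposition 2.4 is stated as $|D^\alpha\varPhi(u)|\le C(n,p)M\,d(u,Z\cup\partial\Omega)$ with exponent $1$ on the right rather than $1-|\alpha|$; one reads it (consistently with Definition 1.2 and the hypothesis that the gradient is bounded) as giving the full scale of estimates $d(u,\cdot)^{1-|\alpha|}$ for $1\le|\alpha|\le p$, and this is precisely what is needed to conclude $\Lambda_p$-regularity of $g$ and of the walls; the verification that these per-piece estimates relative to $Z\cup\partial E$ imply the per-cell estimates relative to the cell's own boundary is the routine inequality $d(x,\partial S)\le d(x,Z\cup\partial E)$ together with Remark 2.3 identifying $\Lambda_p$-regular cells with $\mathcal C^p$-submanifolds.
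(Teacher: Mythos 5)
Your proposal unpacks, in more explicit form, the same inductive scheme from Kurdyka--Paw\l ucki [7] and [8] that the paper's one-paragraph proof invokes: reduction to the open case via Lipschitz graphs, projection to $\Real^{n-1}$, and at each level the use of Remark 2.5 followed by Proposition 2.4 to make $g$ (and the cell walls $\psi_1,\psi_2$) $\Lambda_p$-regular off a nowhere-dense bad set that is then fed back into the induction, with termination because the ambient dimension strictly decreases. Your reading of Proposition 2.4 as asserting $|D^\alpha\varPhi(u)|\le C(n,p)M\,d(u,Z\cup\partial\varOmega)^{1-|\alpha|}$ (the exponent $1-|\alpha|$ having been dropped in the paper's typesetting) is the intended one and is exactly what the argument needs.
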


\begin{proof}
The proof follows the inductive procedure as in the proof of Proposition 4 in [7] (or that of Theorem 3 in [8]); i.e. the induction on $\dim E$. The only difference is that, at each inductive step, constructing strata of dimension $< m$, we have to take into account the Lipschitz mapping $g$ restricted to strata of dimension $m$ making use of Remark 2.5 and Proposition 2.4.
\end{proof}

\bigskip
\section{A partition of unity}

\begin{defn}
$\phantom{.............................................................................................................................................}$

\medskip
Let $W$ be a closed semialgebraic subset of $\Real^n$ and let $Z\subset \Real^n\setminus W$. We will consider the following open neighborhoods of $Z$ in $\Real^n$

$$
G_\eta(Z, W) :=\{x\in \Real^n\setminus W: \, d(x, Z) < \eta d(x, W)\},
$$
where $\eta > 0$. (We adopt the convention that $d(x, \emptyset) = \infty$.)
\end{defn}

\medskip
The main result of this section is the following theorem on $\Lambda^0_p(W)$-partition of unity, which can be considered as a semialgebraic counterpart of the famous Whitney partition of unity.

\begin{thm}
$\phantom{................................................................................................................................................}$

\medskip
Let $W$ be a closed semialgebraic subset of $\Real^n$ and let $U_1,\dots, U_s$ be any finite covering of $\Real^n\setminus W$ by semialgebraic subsets.
Then, for any positive integer $p$ and any $\eta > 0$, there exist $\Lambda^0_p(W)$-regular functions $\omega_i: \Real^n\setminus W\longrightarrow [0, 1]$ \,\, $(i\in \{1,\dots, s\})$ \, such that \, $\omega_1 + \dots + \omega_s \equiv 1$ \, on $\Real^n\setminus W$ \, and \, $\supp$$\omega_i \subset G_{\eta}(U_i, W)$ \, $(i\in \{1,\dots, s\})$, where $\supp$$\omega_i$ denotes the closure of $\{x\in \Real^n\setminus W: \, \omega_i(x)\neq 0\}$ in $\Real^n\setminus W$.
\end{thm}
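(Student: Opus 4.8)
The plan is to build the partition of unity in two stages: first produce a single "good" bump function attached to a semialgebraic set, then combine such bumps and normalize. For the first stage, given a semialgebraic set $U\subset\Real^n\setminus W$ I would look for a semialgebraic continuous function $\delta_U:\Real^n\setminus W\to[0,\infty)$ that is comparable to $d(x,W)$ on a shrunken neighborhood $G_{\eta'}(U,W)$ and vanishes outside $G_{\eta}(U,W)$; the natural candidate is something like $\delta_U(x)=\max\{0,\,\eta d(x,W)-d(x,U)\}$, which is semialgebraic, Lipschitz, vanishes on $W$, is supported in $G_\eta(U,W)$, and on $G_{\eta'}(U,W)$ (with $\eta'<\eta$) is bounded below by $(\eta-\eta')d(x,W)$. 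Each $\delta_{U_i}$ is then a semialgebraic Lipschitz function vanishing on $W$, hence fits the hypotheses of Theorem 1.3 — but I cannot apply Theorem 1.3 here (it is proved \emph{using} this partition of unity), so instead I would regularize $\delta_{U_i}$ directly via Theorem 2.6.

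The heart of the argument is this regularization step. Apply Theorem 2.6 to $E=\Real^n$, to the semialgebraic Lipschitz mapping $g=(\delta_{U_1},\dots,\delta_{U_s}):\Real^n\to\Real^s$, and to the family of sets $A_j$ consisting of $W$ together with all the closed sets $\{\delta_{U_i}=0\}$, $\{\delta_{U_i}\ge c\, d(\cdot,W)\}$-type loci that one needs to control the supports; this yields a $\mathcal C^p$-stratification $\mathcal S$ of $\Real^n$, compatible with these sets, whose strata are (after a coordinate permutation) $\Lambda_p$-regular cells on which each $\delta_{U_i}$ becomes $\Lambda_p$-regular in the sense of (2.7). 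On each open stratum $S\subset\Real^n\setminus W$ the function $\delta_{U_i}|S$ is then $\Lambda_p$-regular with respect to $\partial S$; one upgrades this to a $\Lambda^0_p(W)$ bound by observing that $d(x,\partial S)$ dominates a fixed multiple of $d(x,W)$ on the relevant part of $S$ because $W$ is a union of strata of the stratification and the cell structure is Lipschitz (this is the standard interplay, used already in [7,8], between distance to the boundary of a $\Lambda_p$-regular cell and distance to a subset that is a union of lower strata). So on each top-dimensional stratum we get genuine $\Lambda^0_p(W)$-regular approximants $\widetilde\delta_{U_i}$; the lower-dimensional strata form a semialgebraic nowhere-dense closed set $Z$, and the content of Theorem 2.6 / Remark 2.5 is precisely that the derivative estimates survive with $d(\cdot,Z\cup\partial\Omega)$ in place of $d(\cdot,\partial\Omega)$, so that the bounds are uniform across the whole of $\Real^n\setminus(W\cup Z)$, and in fact, by a further gluing using a $\mathcal C^p$ partition subordinate to the strata (or by the Whitney-type arguments of [7]), across all of $\Real^n\setminus W$. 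Writing $\Delta:=\widetilde\delta_{U_1}+\dots+\widetilde\delta_{U_s}$, one checks $\Delta(x)\ge c\,d(x,W)>0$ everywhere on $\Real^n\setminus W$ (since the $\{U_i\}$ cover and each $x$ lies in some $G_{\eta'}(U_i,W)$), set $\omega_i:=\widetilde\delta_{U_i}/\Delta$, and verify that quotients of $\Lambda^0_p(W)$-regular functions by a $\Lambda^0_p(W)$-regular function bounded below by $c\,d(\cdot,W)$ are again $\Lambda^0_p(W)$-regular — a Leibniz/Faà di Bruno computation with the homogeneity weights $d(x,W)^{-|\alpha|}$ that is routine but must be written out. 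The support condition $\supp\omega_i\subset G_\eta(U_i,W)$ follows from $\widetilde\delta_{U_i}$ vanishing where $\delta_{U_i}$ does, which is guaranteed by compatibility of $\mathcal S$ with the zero sets.

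The main obstacle is the passage from "$\Lambda_p$-regular relative to $\partial S$ on each stratum" to "$\Lambda^0_p(W)$-regular, uniformly, on $\Real^n\setminus W$": one must both (i) compare $d(x,\partial S)$ with $d(x,W)$ — controlled because $W$ is a union of strata and each cell is Lipschitz, but requiring a careful geometric estimate near the lower strata — and (ii) glue the stratum-wise smooth approximants into a single $\mathcal C^p$ function on $\Real^n\setminus W$ without losing the homogeneous derivative bounds, which is exactly the kind of Whitney-partition argument the theorem is meant to provide and therefore has to be done here "by hand" via Theorem 2.6 and an induction on $\dim$ of the lower strata, mirroring the proof of Proposition 4 in [7]. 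Everything else — the explicit formula for $\delta_U$, the positivity of $\Delta$, the quotient rule for the derivative estimates — is elementary once these two points are secured.
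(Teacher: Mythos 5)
Your two-stage plan --- build one good bump per $U_i$, then normalize by the sum and conclude via Leibniz/Fa\`a di Bruno --- is exactly the shape of the paper's argument, whose final step is literally $\omega_i=\psi_i/\sum_j\psi_j$ handled by Lemmas 3.3 and 3.4. But you do not actually construct the bumps, and the one substantive claim you make about that construction is wrong. After applying Theorem 2.6 you have, on each open stratum $S$, the estimate $|D^\alpha(\delta_{U_i}|S)(x)|\le M\,d(x,\partial S)^{1-|\alpha|}$, and in your step (i) you assert this upgrades to a $d(\cdot,W)$-weighted bound because ``$d(x,\partial S)$ dominates a fixed multiple of $d(x,W)$\dots since $W$ is a union of strata and each cell is Lipschitz.'' That is false: $\partial S$ contains lower-dimensional strata lying in $\Real^n\setminus W$, and near such a stratum $d(x,\partial S)\to 0$ while $d(x,W)$ stays bounded below, so there is no fixed multiple; the stratification bound blows up exactly where it must not, and no quotient rule can repair it. Your step (ii) --- gluing the stratum-wise approximants into a single $\mathcal C^p$ function on $\Real^n\setminus W$ with uniform $d(\cdot,W)$-weighted derivative estimates --- is the whole theorem; saying it ``has to be done by hand via Theorem 2.6 and an induction mirroring [7]'' names the gap without closing it.

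The paper closes it in Proposition 3.9 (the property $\mathcal B_n(Z,W)$ holds for every semialgebraic $Z\subset\Real^n\setminus W$), proved by induction on $\dim Z$. The key device, which has no counterpart in your sketch, is the bump
\[
\psi(x)=\bigl(1-\lambda(x)\bigr)\,P\Bigl(\frac{|w-\varphi(u)|^2}{\gamma\, d\bigl((u,\varphi(u)),W\bigr)^2}\Bigr)+\lambda(x),
\]
where $\lambda$ is the bump attached, by the induction hypothesis, to $Z'=\partial Z\setminus W$. The factor $1-\lambda$ switches off the middle term precisely in the region where $d(x,Z')$ is small compared with $d(x,W)$ --- the region where your step (i) fails --- so the derivative estimates only need to be verified where $d(x,Z')\ge\rho'\,d(x,W)$, and there the required comparison between $d\bigl((u,\varphi(u)),\partial Z\bigr)$ and $d(x,W)$ genuinely holds. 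Filling the hole in your proposal would, in effect, reproduce this construction; as written, your sketch is an incomplete version of the paper's proof, not a genuinely different route.
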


Before starting the proof of Theorem 3.2, we will prove the following two simple lemmas.

\begin{lem}

Let $W$ be a closed  semialgebraic subset of $\Real^n$ and let $\varOmega$ be an open semialgebraic subset of $\Real^n$ disjoint from $W$. If $f:\varOmega\longrightarrow \Real$ is a $\Lambda^k_p(W)$-regular function and $g:\varOmega\longrightarrow \Real$ is a $\Lambda^l_p(W)$-regular function, where $k, l, p\in \mathbb Z$, $p > 0$ and if there exists $A > 0$ such that $|f(x)|\leq Ad(x, W)^k$ and $|g(x)|\leq Ad(x, W)^l$, for each $x\in \varOmega$, then the function $fg$ is $\Lambda^{k+l}_p(W)$-regular.
\end{lem}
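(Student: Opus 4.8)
The plan is to reduce everything to the Leibniz rule, the only subtlety being the treatment of the zeroth-order terms in the expansion. First note that $fg$ is again a semialgebraic $\mathcal{C}^p$-function on $\varOmega$ (products of semialgebraic $\mathcal C^p$-functions are such), so the content of the lemma is only the derivative estimate, i.e. producing the constant in Definition 1.2 for $fg$ with exponent $k+l$.

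The key preliminary observation is that the hypotheses, taken together, control \emph{all} partial derivatives of order at most $p$, including order $0$. Let $M_f$ and $M_g$ be the constants witnessing $\Lambda^k_p(W)$-regularity of $f$ and $\Lambda^l_p(W)$-regularity of $g$, and set $M := \max\{M_f, M_g, A\}$. Then, for every $x\in\varOmega$ and every $\beta,\gamma\in\Nat^n$ with $|\beta|\le p$ and $|\gamma|\le p$,
$$
|D^\beta f(x)|\le M\, d(x, W)^{k-|\beta|},\qquad |D^\gamma g(x)|\le M\, d(x, W)^{l-|\gamma|}.
$$
Indeed, for $|\beta|\ge 1$ this is the regularity assumption on $f$, while for $\beta = 0$ it is exactly the growth hypothesis $|f(x)|\le A\, d(x,W)^{k}$; similarly for $g$. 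This is precisely the place where the extra assumptions $|f|\le A\, d(\cdot,W)^{k}$ and $|g|\le A\, d(\cdot,W)^{l}$ enter — without them the Leibniz expansion of $D^\alpha(fg)$ could not be estimated at the endpoints $\beta = 0$ and $\beta = \alpha$, since $\Lambda^k_p(W)$-regularity by itself says nothing about $f$ itself, only about its derivatives.

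Now fix $\alpha\in\Nat^n$ with $1\le|\alpha|\le p$. By the Leibniz formula,
$$
D^\alpha(fg)(x) = \sum_{\beta\le\alpha}\binom{\alpha}{\beta}\, D^\beta f(x)\, D^{\alpha-\beta}g(x).
$$
For each $\beta\le\alpha$ we have $|\beta|\le|\alpha|\le p$ and $|\alpha-\beta|\le|\alpha|\le p$, so the displayed bounds apply and give
$$
\bigl|D^\beta f(x)\, D^{\alpha-\beta}g(x)\bigr|\le M\, d(x,W)^{k-|\beta|}\cdot M\, d(x,W)^{l-|\alpha|+|\beta|} = M^2\, d(x,W)^{(k+l)-|\alpha|}.
$$
Summing, and using $\sum_{\beta\le\alpha}\binom{\alpha}{\beta} = 2^{|\alpha|}\le 2^p$, we obtain $|D^\alpha(fg)(x)|\le 2^p M^2\, d(x,W)^{(k+l)-|\alpha|}$, valid for all $x\in\varOmega$ and all $\alpha$ with $1\le|\alpha|\le p$. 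Thus $fg$ is $\Lambda^{k+l}_p(W)$-regular with constant $2^p M^2$. There is no real obstacle here; the proof is a direct, purely computational application of the Leibniz rule, and the only point worth flagging is the role of the boundedness hypotheses in handling the order-zero factors.
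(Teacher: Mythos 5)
Your proof is correct and follows exactly the approach the paper intends: the paper's own proof of this lemma consists of the single sentence ``Directly from the Leibnitz formula.'' You have simply spelled out the Leibniz computation, correctly identifying that the hypotheses $|f|\le A\,d(\cdot,W)^k$ and $|g|\le A\,d(\cdot,W)^l$ are what control the order-zero factors in the expansion.
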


\begin{proof} Directly from the Leibnitz formula.
\end{proof}

\medskip
\begin{lem}

Let $W$ be a closed  semialgebraic subset of $\Real^n$ and let $\varOmega$ be an open semialgebraic subset of $\Real^n$ disjoint from $W$. If $f:\varOmega\longrightarrow \Real$ is a $\Lambda^k_p(W)$-regular function, where $k, p\in \mathbb Z$, $p > 0$, and there exists $a > 0$ such that $ad(x, W)^k \leq |f(x)|$, for each $x\in \varOmega$, then the function $1/f$ is $\Lambda^{-k}_p(W)$-regular.

\end{lem}

\begin{proof} Observe that $D^\alpha(1/f)$ \, $(\alpha\in \Nat^n \setminus \{0\})$, is a linear combination, with integral coefficients independent of $f$, of products of the form
$$
f^{-(m + 1)}(D^{\beta_1}f)\dots (D^{\beta_m}f),
$$
where $1\leq m \leq |\alpha|$, $\beta_1,\dots, \beta_m\in \Nat^n\setminus \{0\}$, and $\sum_{i=1}^m \beta_i = \alpha$. Hence, we get
$$
|D^\alpha(1/f)(x))|\leq C d(x, W)^{-k(m+1)}d(x, W)^{k-|\beta_1|}\dots d(x, W)^{k - |\beta_m|} =
$$
$$
Cd(x, W)^{-k-|\alpha|},
$$
where $C > 0$.
\end{proof}

\medskip
\begin{lem}
Let $W$ be a closed  semialgebraic subset of $\Real^n$, let $\varOmega$ be an open semialgebraic subset of $\Real^n$ disjoint from $W$ and let $p$ be a positive integer. If $f:\varOmega\longrightarrow \Real$ is a bounded $\Lambda^0_p(W)$-regular function and $\varPhi: \Real\longrightarrow \Real$ is a semialgebraic $\mathcal C^p$-function, then $\varPhi\circ f$ is a $\Lambda^0_p(W)$-regular function.
\end{lem}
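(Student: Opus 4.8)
The plan is to differentiate the composition via the Faà di Bruno formula and to use the boundedness of $f$ to control the derivatives of $\varPhi$ along $f$. First I would note that $\varPhi\circ f$ is semialgebraic, being a composition of semialgebraic maps, and of class $\mathcal C^p$ on $\varOmega$, since both $f$ (by Definition 1.2) and $\varPhi$ are. So it remains to estimate $|D^\alpha(\varPhi\circ f)(x)|$ for $x\in\varOmega$ and $\alpha\in\Nat^n$ with $1\le|\alpha|\le p$.

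For such $\alpha$, the multivariate Faà di Bruno formula expresses $D^\alpha(\varPhi\circ f)$ as a linear combination, with integer coefficients independent of $f$ and $\varPhi$, of products
$$
(\varPhi^{(m)}\circ f)\,(D^{\beta_1}f)\cdots(D^{\beta_m}f),
$$
where $1\le m\le|\alpha|$, $\beta_1,\dots,\beta_m\in\Nat^n\setminus\{0\}$ and $\sum_{i=1}^m\beta_i=\alpha$; here the number of products occurring, together with the sum of the absolute values of the (integer, $f$- and $\varPhi$-independent) coefficients, is bounded by a constant $N=N(n,p)$. Since $f$ is bounded, one may fix $R>0$ with $f(\varOmega)\subset[-R,R]$; as $\varPhi$ is $\mathcal C^p$, each $\varPhi^{(m)}$ with $0\le m\le p$ is continuous, hence bounded on the compact set $[-R,R]$, so $|\varPhi^{(m)}(f(x))|\le C_1$ for all $x\in\varOmega$ and $1\le m\le p$, for some $C_1>0$. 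On the other hand, $\Lambda^0_p(W)$-regularity of $f$ provides $M\ge 1$ with $|D^\beta f(x)|\le M d(x,W)^{-|\beta|}$ whenever $1\le|\beta|\le p$. Multiplying these estimates, each displayed product is bounded in absolute value by $C_1 M^m d(x,W)^{-(|\beta_1|+\dots+|\beta_m|)}=C_1 M^m d(x,W)^{-|\alpha|}\le C_1 M^p d(x,W)^{-|\alpha|}$, whence $|D^\alpha(\varPhi\circ f)(x)|\le N C_1 M^p d(x,W)^{-|\alpha|}$, and $\varPhi\circ f$ is $\Lambda^0_p(W)$-regular with constant $C=NC_1M^p$.

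I do not expect any genuine obstacle: this is essentially the same device already used in the proof of Lemma 3.4 (there with $\varPhi(t)=1/t$). The only place the hypotheses are really needed — rather than it being routine Faà di Bruno bookkeeping — is that boundedness of $f$ confines its values to a compact interval, on which the derivatives of the $\mathcal C^p$ function $\varPhi$ are automatically bounded; without boundedness one would have no uniform control of the factors $\varPhi^{(m)}\circ f$.
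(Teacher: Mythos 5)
Your proposal is correct and follows essentially the same route as the paper: both expand $D^\alpha(\varPhi\circ f)$ via the Fa\`a di Bruno formula into products $(\varPhi^{(i)}\circ f)(D^{\beta_1}f)\cdots(D^{\beta_i}f)$ and estimate each factor by the $\Lambda^0_p(W)$-regularity of $f$. The only difference is that you spell out explicitly the point the paper leaves implicit, namely that boundedness of $f$ is what guarantees the factors $\varPhi^{(i)}\circ f$ are uniformly bounded.
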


\begin{proof}

Observe that if $\alpha\in \Nat^n$ and $1\leq |\alpha|\leq p$, then $D^\alpha(\varPhi\circ f)$ can be represented as a linear combination, with integral coefficients independent of $\varPhi$ and $f$, of products of the form
$$
\varPhi^{(i)}(f)(D^{\beta_1}f) \dots (D^{\beta_i}f),
$$
where $1\leq i \leq |\alpha|$  and  $\beta_1,\dots, \beta_i\in \Nat^n\setminus \{0\}$ are such that $\beta_1 + \dots + \beta_i = \alpha$. Hence, for some constant $C > 0$
$$
|D^\alpha(\varPhi\circ f)(x)|\leq C d(x, W)^{-|\beta_1|}\dots d(x, W)^{-|\beta_i|} = Cd(x, W)^{-|\alpha|}.
$$
\end{proof}

\medskip
\begin{defn}

Let $W$ be a closed semialgebraic subset of $\Real^n$ and let $Z\subset \Real^n\setminus W$. We will say that \emph{the property $\mathcal B_n(Z, W)$ holds}, if for any positive integer $p$ and for any $\eta > 0$ there exists a $\Lambda^0_p(W)$-regular function $\psi: \Real^n\setminus W\longrightarrow [0, 1]$ such that
\begin{equation}
\text{$\psi\equiv 1$ on $G_\rho(Z, W)$, with some $\rho\in (0, \eta)$, and}
\end{equation}
\begin{equation}
\text{$\supp$$\psi\subset G_\eta(Z, W).$}
\end{equation}

\end{defn}

\medskip
\begin{lem} If $W$ is a closed semialgebraic subset of $\Real^n$, $Z_1, Z_2, Z\subset \Real^n\setminus W$ and $\varepsilon, \eta\in (0, +\infty)$, then
$$
G_\eta(Z_1\cup Z_2, W) = G_\eta(Z_1, W)\cup G_\eta(Z_2, W) \quad \text{and} \quad G_\varepsilon(G_\eta(Z, W), W)\subset G_{\varepsilon + \eta + \varepsilon\eta}(Z, W).
$$
\end{lem}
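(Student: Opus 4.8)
The plan is to prove the two statements separately, both by direct unwinding of the definition of $G_\eta(Z,W)$ together with the triangle inequality for the distance function $d(x,\cdot)$.

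For the first equality, I would argue by double inclusion. If $x\in G_\eta(Z_1\cup Z_2,W)$, then $d(x,Z_1\cup Z_2)<\eta\,d(x,W)$; since $d(x,Z_1\cup Z_2)=\min\{d(x,Z_1),d(x,Z_2)\}$, one of the two distances $d(x,Z_i)$ is $<\eta\,d(x,W)$, so $x$ lies in $G_\eta(Z_1,W)$ or in $G_\eta(Z_2,W)$. Conversely, if $x\in G_\eta(Z_i,W)$ for some $i$, then $d(x,Z_1\cup Z_2)\le d(x,Z_i)<\eta\,d(x,W)$, hence $x\in G_\eta(Z_1\cup Z_2,W)$. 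One should note the degenerate cases: if, say, $Z_1=\emptyset$ then by the stated convention $d(x,Z_1)=\infty$ and $G_\eta(Z_1,W)=\emptyset$, so the identity still reads correctly.

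For the inclusion $G_\varepsilon(G_\eta(Z,W),W)\subset G_{\varepsilon+\eta+\varepsilon\eta}(Z,W)$, take $x\in G_\varepsilon(G_\eta(Z,W),W)$, so $d\big(x,G_\eta(Z,W)\big)<\varepsilon\,d(x,W)$. Choose a point $y\in G_\eta(Z,W)$ with $d(x,y)$ as close as we like to $d\big(x,G_\eta(Z,W)\big)$; in particular we may assume $d(x,y)<\varepsilon\,d(x,W)$. Then $d(y,W)\le d(x,y)+d(x,W)<(\varepsilon+1)\,d(x,W)$, and since $y\in G_\eta(Z,W)$ we have $d(y,Z)<\eta\,d(y,W)$. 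Combining,
$$
d(x,Z)\le d(x,y)+d(y,Z)<\varepsilon\,d(x,W)+\eta\,d(y,W)<\varepsilon\,d(x,W)+\eta(\varepsilon+1)\,d(x,W)=(\varepsilon+\eta+\varepsilon\eta)\,d(x,W),
$$
which is exactly the assertion $x\in G_{\varepsilon+\eta+\varepsilon\eta}(Z,W)$.

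I do not expect any real obstacle here: the only points requiring a little care are the handling of the empty-set convention for $d(x,\emptyset)$ and the fact that the infimum defining $d\big(x,G_\eta(Z,W)\big)$ need not be attained, which is why one works with an approximating point $y$ and a strict inequality rather than with a minimizer; since all the inequalities used are strict or pass to the limit harmlessly, this causes no trouble.
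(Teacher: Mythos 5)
Your proof is correct and follows essentially the same route as the paper: the first identity is disposed of by the observation $d(x,Z_1\cup Z_2)=\min\{d(x,Z_1),d(x,Z_2)\}$ (which the paper labels ``straightforward''), and the second inclusion is obtained by picking an intermediate point $y\in G_\eta(Z,W)$ with $|x-y|<\varepsilon\,d(x,W)$ and chaining the triangle inequality exactly as in the paper. Your additional remarks on the empty-set convention and the non-attainment of the infimum are harmless elaborations that don't change the argument.
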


\begin{proof} The first being straightforward, we will check the second inclusion. Let $x\in G_\varepsilon(G_\eta(Z, W), W)$. Hence, $d(x, G_\eta(Z, W)) < \varepsilon d(x, W)$. It follows that there exists $y\in G_\eta(Z, W)$ such that $|x - y| <  \varepsilon d(x, W)$. On the other hand $d(y, Z)< \eta d(y, W)$; thus,
$$
d(x, Z)\leq |x - y| + d(y, Z) < \varepsilon d(x, W) + \eta d(y, W)\leq
\varepsilon d(x, W) + \eta [|x - y| + d(x, W)] <
$$
$$
\varepsilon d(x, W) + \eta\varepsilon d(x, W) + \eta d(x, W) = (\varepsilon + \eta + \varepsilon\eta) d(x, W).
$$
\end{proof}

\medskip
\begin{lem}\label{suma}

If $W$ is a closed semialgebraic subset of $\Real^n$ and $Z_1,\dots, Z_k\subset \Real^n\setminus W$ and if $\mathcal B_n(Z_i, W)$ holds for every $i\in \{1,\dots, k\}$, then $\mathcal B_n(\bigcup_{i = 1}^k Z_i, W)$ holds.

\end{lem}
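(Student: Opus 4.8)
The plan is to reduce the union case to the two-set case by induction on $k$, so the real content is the step $k=2$: assuming $\mathcal B_n(Z_1,W)$ and $\mathcal B_n(Z_2,W)$, show $\mathcal B_n(Z_1\cup Z_2,W)$. Fix a positive integer $p$ and $\eta>0$; I must produce a single $\Lambda^0_p(W)$-regular function $\psi:\Real^n\setminus W\to[0,1]$ with $\psi\equiv 1$ on $G_\rho(Z_1\cup Z_2,W)$ for some $\rho\in(0,\eta)$ and $\supp\psi\subset G_\eta(Z_1\cup Z_2,W)$. The natural candidate is to take $\psi_i$ witnessing $\mathcal B_n(Z_i,W)$ for a suitably small target radius $\eta'\le\eta$, so that $\psi_i\equiv 1$ on $G_{\rho_i}(Z_i,W)$ and $\supp\psi_i\subset G_{\eta'}(Z_i,W)$, and then set
\begin{equation}
\psi := 1 - (1-\psi_1)(1-\psi_2).
\end{equation}
This $\psi$ takes values in $[0,1]$, and since each $\psi_i$ is $\Lambda^0_p(W)$-regular and bounded, the product $(1-\psi_1)(1-\psi_2)$ is $\Lambda^0_p(W)$-regular by Lemma 3.3, hence so is $\psi$; that settles the regularity requirement with essentially no work.

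Next I would verify the support condition. If $\psi(x)\neq 1\cdot$ fails, i.e. $\psi(x)\neq 0$, then $(1-\psi_1(x))(1-\psi_2(x))\neq 1$, which forces $\psi_1(x)\neq 0$ or $\psi_2(x)\neq 0$; hence $\{\psi\neq 0\}\subset\{\psi_1\neq 0\}\cup\{\psi_2\neq 0\}$, and taking closures in $\Real^n\setminus W$ gives $\supp\psi\subset\supp\psi_1\cup\supp\psi_2\subset G_{\eta'}(Z_1,W)\cup G_{\eta'}(Z_2,W)$. By the first identity of Lemma 3.8 this equals $G_{\eta'}(Z_1\cup Z_2,W)\subset G_\eta(Z_1\cup Z_2,W)$, as required, provided I chose $\eta'\le\eta$. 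For the "equals $1$" condition: wherever $\psi_1(x)=1$ or $\psi_2(x)=1$ we get $\psi(x)=1$, so $\psi\equiv 1$ on $G_{\rho_1}(Z_1,W)\cup G_{\rho_2}(Z_2,W)=G_{\rho}(Z_1\cup Z_2,W)$ with $\rho:=\min(\rho_1,\rho_2)$, again using Lemma 3.8; and $\rho\in(0,\eta)$ since $\rho_i\in(0,\eta')\subset(0,\eta)$.

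There is essentially no hard obstacle here — the main point to be careful about is bookkeeping of the radii: one must apply the hypothesis $\mathcal B_n(Z_i,W)$ with the \emph{same} target $\eta'$ (any $\eta'$ with $0<\eta'\le\eta$ works, e.g. $\eta'=\eta$), record the resulting inner radii $\rho_i$, and then take their minimum for the conclusion. For general $k$ one either iterates $\psi\mapsto 1-(1-\psi)(1-\psi_{i})$ or, equivalently, sets $\psi:=1-\prod_{i=1}^k(1-\psi_i)$ directly; Lemma 3.3 applied repeatedly (or the obvious induction) shows the product is $\Lambda^0_p(W)$-regular, and the support and "$\equiv 1$" bookkeeping is the same, with $\rho:=\min_i\rho_i$ and the first identity of Lemma 3.8 used to rewrite unions of the $G_{\eta}(Z_i,W)$ as $G_\eta(\bigcup_i Z_i,W)$.
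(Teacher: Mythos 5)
Your proof is correct, but it takes a genuinely different route from the paper's. The paper defines $\psi := 1 - P\bigl(\sum_{i=1}^k \psi_i\bigr)$, where $P:\Real\to[0,1]$ is a fixed piecewise-polynomial $\mathcal C^p$-cutoff with $P\equiv 1$ on $(-\infty,1/3]$ and $P\equiv 0$ on $[2/3,\infty)$, and obtains $\Lambda^0_p(W)$-regularity from Lemma 3.5 (composition of a bounded $\Lambda^0_p(W)$-regular function with a semialgebraic $\mathcal C^p$-function). You instead use the algebraic identity $\psi := 1 - \prod_{i=1}^k(1-\psi_i)$ and get regularity from Lemma 3.3 (products of bounded $\Lambda^0_p(W)$-regular functions). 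Both constructions yield a function that equals $1$ wherever some $\psi_i=1$ and vanishes wherever all $\psi_i$ vanish, and in both the bookkeeping of inner/outer radii goes through the first identity of Lemma 3.7 (which you misreference as Lemma 3.8). Your product identity is marginally more elementary in that it avoids choosing $P$; the paper's choice of $P$ is not gratuitous, though, since the very same cutoff $P$ is reused later in the proof of Proposition 3.9. One small inaccuracy: you assert $G_{\rho_1}(Z_1,W)\cup G_{\rho_2}(Z_2,W)=G_{\rho}(Z_1\cup Z_2,W)$ with $\rho=\min(\rho_1,\rho_2)$, but with $\rho$ the minimum only the inclusion $G_{\rho}(Z_1\cup Z_2,W)\subset G_{\rho_1}(Z_1,W)\cup G_{\rho_2}(Z_2,W)$ holds; that is all you need, since it is exactly the inclusion guaranteeing $\psi\equiv 1$ on $G_{\rho}(Z_1\cup Z_2,W)$.
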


\begin{proof} Take a piecewise polynomial $\mathcal C^p$-function $P: \Real\longrightarrow [0, 1]$ such that $P(t) = 1$, when $t\leq 1/3$, and $P(t) = 0$, when $t\geq 2/3$. For any given $\eta > 0$, let $\psi_i: \Real^n\setminus W\longrightarrow [0, 1]$ \, $(i\in \{1,\dots, k\})$ \, be a $\Lambda^0_p(W)$-regular function such that
$\psi_i = 1$ on $G_\rho(Z_i, W)$, \linebreak where $\rho\in (0, \eta)$,  and  $\supp$$\psi_i\subset G_\eta(Z_i, W)$. Since, by Lemma 3.7, $G_\eta(\bigcup_{i=1}^k Z_i, W) = \bigcup_{i=1}^k G_\eta(Z_i, W)$, the function
$$
\psi: \Real^n\setminus W\ni x \longmapsto 1 - P\Big(\sum_{i=1}^k \psi_i(x)\Big)\in [0, 1]
$$
is a $\varLambda^0_p(W)$-regular function (by Lemma 3.5) corresponding to $\bigcup_{i=1}^k Z_i$.
\end{proof}

\medskip
\begin{prop}
$\phantom{...............................................................................................................................................}$

\medskip
For any closed semialgebraic subset $W$ of $\Real^n$ and any semialgebraic $Z\subset \Real^n\setminus W$, the property $\mathcal B_n(Z, W)$ holds.
\end{prop}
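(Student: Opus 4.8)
The plan is to prove the proposition by induction on $n$, reducing at each stage to the case of a single $\Lambda_p$-regular cell. First I would apply Theorem 2.6 with $E=\Real^n$, distinguished subsets $W$ and $Z$, and the semialgebraic Lipschitz map $x\mapsto(d(x,W),d(x,Z))$, obtaining a semialgebraic $\mathcal C^p$-stratification $\mathcal S$ of $\Real^n$ compatible with $W$ and $Z$ whose strata are, after an orthogonal change of coordinates, $\Lambda_p$-regular cells on which $d(\cdot,W)$ and $d(\cdot,Z)$ behave $\Lambda_p$-regularly in the sense of $(2.7)$. An orthogonal change of coordinates affects neither the hypotheses nor the conclusion of $\mathcal B_n$, so each stratum may be handled in its own coordinates; and since $Z$ is the union of those strata contained in $\Real^n\setminus W$, Lemma~\ref{suma} reduces the claim to proving $\mathcal B_n(S,W)$ for a single $\Lambda_p$-regular cell $S\subset\Real^n\setminus W$. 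I may also assume $0<\eta<1$.

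The guiding construction is to produce, for a large $\lambda>2/\eta^2$, a bounded semialgebraic $\Lambda^0_p(W)$-regular function $f:\Real^n\setminus W\to[0,1]$ with $f\ge\tfrac23$ on $G_\rho(S,W)$ for a suitable $\rho\in(0,\eta)$ and $f\le\tfrac13$ off $G_\eta(S,W)$; then $\psi:=P\circ f$, with $P:\Real\to[0,1]$ a piecewise-polynomial $\mathcal C^p$-function equal to $1$ on $[\tfrac23,\infty)$ and to $0$ on $(-\infty,\tfrac13]$, is $\Lambda^0_p(W)$-regular by Lemma 3.5 and has the required value and support. Morally $f$ is $\delta_W^{\,2}/(\delta_W^{\,2}+\lambda\delta_S^{\,2})$ with $\delta_W=d(\cdot,W)$, $\delta_S=d(\cdot,S)$: this quantity is $\ge\tfrac23$ on $G_{1/\sqrt{2\lambda}}(S,W)$ and $\le\tfrac13$ off $G_{\sqrt{2/\lambda}}(S,W)$, and both $1/\sqrt{2\lambda}$ and $\sqrt{2/\lambda}$ lie in $(0,\eta)$. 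If $\delta_W^{\,2}$ and $\delta_S^{\,2}$ were $\Lambda^2_p(W)$-regular the argument would be finished at once: $\delta_W^{\,2}+\lambda\delta_S^{\,2}$ would be $\Lambda^2_p(W)$-regular and bounded below by $\delta_W^{\,2}$, so by Lemma 3.4 its reciprocal would be $\Lambda^{-2}_p(W)$-regular, and by Lemma 3.3 multiplication by $\delta_W^{\,2}$ would return a $\Lambda^0_p(W)$-regular function. That the idea is sound is already visible in the trivial case $\dim S=0$: if $S=\{a\}$, then on $V=\{x:|x-a|<\tfrac12 d(x,W)\}$ one has $\overline V\subset\Real^n\setminus W$ and $\tfrac12 d(a,W)\le d(x,W)\le 2d(a,W)$, so $d(\cdot,W)$ is comparable to a constant on the support of any bump concentrated at $a$, and $\psi(x)=Q(|x-a|^2)$ with a suitable semialgebraic $\mathcal C^p$-cutoff $Q$ is automatically $\Lambda^0_p(W)$-regular.

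The obstruction, and the reason for the induction on $n$, is that $\delta_W$ and $\delta_S$ are merely semialgebraic Lipschitz, not $\mathcal C^p$, so $\delta_W^{\,2}/(\delta_W^{\,2}+\lambda\delta_S^{\,2})$ is in general not even $\mathcal C^p$ and must be replaced by a $\mathcal C^p$-surrogate keeping the same two-sided separation and the same estimates. By Remark 2.5 and Proposition 2.4, $\delta_W^{\,2}$ and $\delta_S^{\,2}$ are $\mathcal C^p$ with $\Lambda_p$-type bounds off a closed, nowhere dense semialgebraic set $Z_0\subset\Real^n\setminus W$ of dimension $<n$ supplied by $\mathcal S$, but there the bounds are governed by $d(\cdot,Z_0\cup W)$ rather than by $d(\cdot,W)$ alone. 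Eliminating this spurious $Z_0$ is the crux. I would exploit the graph description $S=\{(u,\varphi(u)):u\in T\}$ over an open $\Lambda_p$-regular cell $T\subset\Real^m$ with $m<n$, and a $\Lambda_p$-regular map $\varphi$, to transfer the construction to $\Real^m$, where it is available by the inductive hypothesis; lift it back through the projection $(u,w)\mapsto u$ and a vertical cutoff in $w-\varphi(u)$, absorbing the passage across $Z_0$ by multiplying with $\Lambda^0_p(W)$-regular functions built at earlier stages (Lemmas 3.3--3.5 and \ref{suma}). The hard part is precisely this patching: assembling a single global $\mathcal C^p$ function that still satisfies $|D^\alpha f(x)|\le M\,d(x,W)^{-|\alpha|}$ uniformly across the lower-dimensional seams, with no residual dependence on the distance to $Z_0$. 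Once that is achieved, Lemma 3.5 yields $\psi=P\circ f$ and the proposition follows.
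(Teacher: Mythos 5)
Your formula $\delta_W^2/(\delta_W^2+\lambda\delta_S^2)$ is a natural first guess, and you have correctly isolated the real obstacle: $d(\cdot,W)$ and $d(\cdot,S)$ are merely Lipschitz, Proposition 2.4 only gives $\mathcal C^p$-control off a nowhere dense bad set $Z_0$, and the resulting estimates involve $d(\cdot,Z_0\cup W)$ rather than $d(\cdot,W)$. But your proposal stops exactly where the difficulty begins --- the ``hard part is precisely this patching'' paragraph is an acknowledgment of a gap, not a resolution of it. The mechanism you sketch (transfer to $\Real^m$ by induction on the ambient dimension, lift back through the projection, ``absorb the passage across $Z_0$ by multiplying with $\Lambda^0_p(W)$-regular functions built at earlier stages'') does not obviously produce a global $\mathcal C^p$ function with the required bounds by $d(x,W)^{-|\alpha|}$ alone; you have not said how the spurious $d(\cdot,Z_0)$ dependence is actually eliminated.

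The paper sidesteps $Z_0$ entirely, and does so with two devices you did not find. First, the induction is on $m=\dim Z$ with $n$ fixed, and the inductive hypothesis is invoked not for the cell itself but for its lower-dimensional boundary $Z'=\partial Z\setminus W$, producing a $\Lambda^0_p(W)$-regular $\lambda$ supported near $Z'$. Second --- and this is the crucial point --- instead of the global distances $d(x,W)$, $d(x,S)$, the cutoff on the tube $T\times\Real^{n-m}$ is built from the $\Lambda_p$-regular \emph{composites along the cell}: by the refined Theorem 2.6 (condition (2.4) applied to $g=d(\cdot,W)$), the map $u\mapsto d\big((u,\varphi(u)),W\big)$ on $T$ is $\Lambda_p$-regular, and so is $\varphi$ itself. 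The function actually used is
$$
\psi(u,w)=(1-\lambda)\,P\!\left(\frac{|w-\varphi(u)|^2}{\gamma\, d\big((u,\varphi(u)),W\big)^2}\right)+\lambda,
$$
which is already $\mathcal C^p$ on $T\times\Real^{n-m}$ by construction, extends by $\lambda$ across the boundary by a geometric argument (Lemma 3.10), and whose $\Lambda^0_p(W)$-regularity follows from Lemmas 3.3--3.5 together with lower bounds on $d((u,\varphi(u)),\partial Z)$ in terms of $d(x,W)$. No bad set $Z_0$ and no desingularization of $d(\cdot,W)^2$ is needed. You should also note that the comparability constants $L=1/\sqrt{1+M^2}$ in (3.3)--(3.4) and the case split on whether $d(x,\partial Z)$ is achieved on $\partial Z\setminus W$ or on $\partial Z\cap W$ are what make the estimates close; without the cell-graph structure your approach lacks an analogue of this. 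In short: the right numerator/denominator are $|w-\varphi(u)|^2$ and $d((u,\varphi(u)),W)^2$, not $d(x,S)^2$ and $d(x,W)^2$.
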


\begin{proof}
We argue by induction on $m = \dim Z$. If $m = 0$, in view of Lemma \ref{suma}, one can assume that $Z = \{z\}$ is a singleton. If $\eta > 0$, then there exists $\rho\in (0, \eta)$ and $0 < r < R$ such that
$$
\text{$G_\rho(\{z\}, W)\subset B(z, r)\subset B(z, R)\subset G_\eta(\{z\}, W)$},
$$
where $B(z, r) := \{x\in \Real^n: \, |x - z|\leq r\}$. Now, it is enough to take a semialgebraic $\mathcal C^p$-function $\psi:\Real^n\longrightarrow [0, 1]$ such that $\psi = 0$ on $\Real^n\setminus B(z, R)$ and $\psi = 1$ on $B(z, r)$.

\medskip
Let now $m\in\{1,\dots, n-1\}$ and assume that $\mathcal B_n(Z', W)$ holds for any semialgebraic subset $Z'\subset\Real^n\setminus W$, such that $\dim Z' < m$.

\medskip
By Theorem 2.6 applied to the sets $Z$ and $W$ and to the Lipschitz function $g(x) := d(x, W)$, combined with Lemma \ref{suma} and the induction hypothesis, we reduce the general case to that where $Z$ is a $\Lambda_p$-regular cell (2.3);
$$
Z = \{(u, w): \, u\in T, w = \varphi(u)\},
$$
where $u = (x_1,\dots, x_m)$, $w = (x_{m+1},\dots, x_n)$, \, $T$ is an open $\Lambda_p$-regular cell in $\Real^m$, and $\varphi: T\longrightarrow \Real^{n-m}$ is a semialgebraic $\Lambda_p$-regular mapping, and moreover, the function
$$
T\ni u \longmapsto d\big((u,\varphi(u)), W\big)\in \Real
$$
is $\Lambda_p$-regular.

\medskip
It is elementary that if $M\geq 0$ is a Lipschitz constant of the mapping $\varphi$, then putting $L := 1/\sqrt{1 + M^2}$, we have

\begin{equation}
\forall x = (u, w)\in T\times \Real^{n-m}: \, L|w - \varphi(u)| \leq d(x, Z) \leq |w - \varphi(u)|
\end{equation}

and

\begin{equation}
\forall \, x\in \Real^n\setminus (T\times \Real^{n-m}): \, d(x, Z) \geq L d(x, \partial Z).
\end{equation}

\medskip
Take any $\eta$ such that
\begin{equation}
0 < \eta < L.
\end{equation}

Fix any $\eta'\in (0, \eta)$. By the induction hypothesis applied to $Z' := \partial Z \setminus W$, where $\partial Z := \overline{Z}\setminus Z$, there exists a $\Lambda^0_p(W)$-regular function $\lambda: \Real^n\setminus W\longrightarrow [0, 1]$ \, such that $\supp$$\lambda\subset G_{\eta'}(Z', W)$ \, and \, $\lambda \equiv 1$ \, on $G_{\rho'}(Z', W)$, \, for some $\rho'\in (0, \eta')$.

\medskip
Put\
$$
\psi(x) = \psi(u, w) := \big(1 - \lambda(x)\big)P\Big(\frac{|w - \varphi(u)|^2}{\gamma d\big((u,\varphi(u)), W\big)^2}\Big) + \lambda(x),
$$
where $x = (u, w)\in T\times \Real^{n-m}$, $P$ is a function from the proof of Lemma \ref{suma} and $\gamma > 0$ is a constant to be carefully chosen. We will show that the function $\psi$ extends by means of $\lambda$ to a $\Lambda^0_p(W)$-regular function $\psi: \Real^n\setminus W\longrightarrow [0, 1]$, provided that $\gamma > 0$ is sufficiently small.

\medskip
Fix any $\delta \in (0, L)$.
According to (3.4), the set
$$
H := \{x\in \Real^n\setminus W: \, d(x, Z) > \delta d(x, \partial Z)\}\cup G_{\rho'}(Z', W)
$$
is an open neighborhood of the set $\big[\Real^n\setminus (T\times \Real^{n-m})\big]\setminus W$ in the set $\Real^n\setminus W$.

\begin{lem} We claim that if $\gamma > 0$ is sufficiently small, then $\psi = \lambda$ on $(T\times \Real^{n-m})\cap H$.
\end{lem}

\medskip
Indeed, let $x\in (T\times \Real^{n-m})\cap H$. If $x\in G_{\rho'}(Z', W)$, then clearly $\psi(x) = \lambda(x)$, so let us assume that $x\not\in G_{\rho'}(Z', W)$; i.e.

\begin{equation}
d(x, \partial Z\setminus W) \geq {\rho'} d(x, W).
\end{equation}

\medskip
The following two cases are possible: $d(x, \partial Z) = d(x, (\partial Z)\setminus W)$, or

 $d(x, \partial Z) = d(x, (\partial Z)\cap W)$.

\medskip
In the first case, we have in view of (3.6)
$$
d\big((u, \varphi(u)), W\big) \leq |(u, \varphi(u)) - x| + d(x, W)
$$
$$
\leq |w - \varphi(u)| + (1/\rho')d(x, \partial Z) \leq |w - \varphi(u)| + \big(1/(\rho'\delta)\big)d(x, Z)
$$
$$
\leq |w - \varphi(u)| + \big(1/(\rho'\delta)\big)|w - \varphi(u)|.
$$
Hence
$$
\frac{|w - \varphi(u)|^2}{\gamma d\big((u, \varphi(u)), W\big)^2} \geq \frac{(\rho'\delta)^2}{\gamma(1 + \rho'\delta)^2} > 2/3,
$$
if only

\begin{equation}
0 < \gamma < \frac{3(\rho'\delta)^2}{2(1 + \rho'\delta)^2}.
\end{equation}

\medskip
In the second case, we have
$$
d\big((u, \varphi(u)), W\big) \leq |(u, \varphi(u)) - x| + d(x, W)
$$
$$
\leq |w - \varphi(u)| + d(x, (\partial Z)\cap W)  = |w - \varphi(u)| + d(x, \partial Z)
$$
$$
< |w - \varphi(u)| + (1/\delta)d(x, Z) \leq |w - \varphi(u)| + (1/\delta)|w - \varphi(u)|.
$$

\medskip
Hence, if $\gamma$ satisfies (3.7), then we have again
$$
\frac{|w - \varphi(u)|^2}{\gamma d\big((u, \varphi(u)), W\big)^2} \geq \frac{\delta^2}{\gamma(1 + \delta)^2} > \frac{(\rho'\delta)^2}{\gamma( 1 + \rho'\delta)^2} > 2/3,
$$
since $\rho' < 1$.

\medskip
It follows that if $\gamma$ satisfies (3.7), then
$$
P\Big(\frac{|w - \varphi(u)|^2}{\gamma d\big((u,\varphi(u)), W\big)^2}\Big) = 0,
$$
hence $\psi(x) = \lambda(x)$, which ends the proof of Lemma 3.10.

\medskip
Now, we will show that, if $\gamma > 0$ satisfies (3.7), then $\supp$$\psi\subset G_\eta(Z, W)$. Let $x\in \Real^n\setminus W$ and $x\not\in G_{\eta'}(Z, W)$, so

\begin{equation}
d(x, Z) \geq \eta' d(x, W).
\end{equation}

\medskip
In the case when $x\in H$, we have $d(x, Z')\geq d(x, Z) \geq \eta' d(x, W)$; hence, $\psi(x) = \lambda(x) = 0$. In the case when $x\not\in H$, we have in particular that $x\in T\times \Real^{n-m}$. As before, $d(x, Z')\geq d(x, Z) \geq \eta' d(x, W)$; hence, $\lambda(x) = 0$. Moreover,

$$
d\big((u,\varphi(u)), W\big)\leq |(u, \varphi(u)) - x| + d(x, W)
$$
$$
\leq |w - \varphi(u)| + (1/\eta')d(x, Z) \leq |w - \varphi(u)|\frac{\eta' + 1}{\eta'};
$$
hence, if $\gamma$ satisfies (3.7),
$$
\frac{|w - \varphi(u)|^2}{\gamma d\big((u, \varphi(u)), W\big)^2} \geq \frac{(\eta')^2}{\gamma(1 + \eta')^2} > \frac{(\rho'\delta)^2}{\gamma (1 + \rho'\delta)^2)} > 2/3,
$$
consequently
$$
P\Big(\frac{|w - \varphi(u)|^2}{\gamma d\big((u,\varphi(u)), W\big)^2}\Big) = 0,\quad\text{thus \, $\psi(x) = \lambda(x) = 0$}.
$$
It follows that $\supp$$\psi\subset G_\eta(Z, W)$.

\medskip
Now we will find $\rho\in (0, \eta)$ such that $\psi\equiv 1$ on $G_\rho(Z, W)$. Assume first that
\begin{equation}
0 < \rho < \rho'\delta
\end{equation}
and take any $x\in G_\rho(Z, W)$; i.e. $d(x, Z) < \rho d(x, W)$.

\medskip
If $x\in G_{\rho'}(Z', W)$, then $\psi(x) = \lambda(x) = 1$, so in what follows we can assume that $x\not\in G_{\rho'}(Z', W)$; i.e.
\begin{equation}
d(x, Z')\geq \rho' d(x, W).
\end{equation}

Consider two possible cases exactly as in the proof of Lemma 3.10. If $d(x, \partial Z) = d(x, Z')$, then by (3.9) and (3.10)
$$
d(x, Z) < \rho d(x, W) < \rho'\delta d(x, W) \leq \delta d(x, Z') = \delta d(x, \partial Z),
$$
which implies that $x\not\in H$. If $d(x, \partial Z) = d(x, (\partial Z)\cap W)$, then
$$
d(x, Z) < \rho d(x, W) < \rho'\delta d(x, W) < \delta d(x, W) \leq \delta d(x, (\partial Z)\cap W) = \delta d(x, \partial Z),
$$
which again implies that $x\not\in H$.

\medskip
Consider now $x\in G_\rho(Z, W)\setminus H$. Then in particular $x = (u, w)\in T\times \Real^{n-m}$ and, according to (3.3),
$$
L|w -\varphi(u)| \leq d(x, Z) < \rho d(x, W) \leq \rho |x - (u, \varphi(u))| + \rho d\big((u, \varphi(u)), W\big)
$$
$$
= \rho |w - \varphi(u)| + \rho d\big((u, \varphi(u)), W\big).
$$

\medskip
Hence, if we assume that

\begin{equation}
 0 < \rho < L\frac{\sqrt{\gamma}}{\sqrt{3} + \sqrt{\gamma}},
\end{equation}

then

$$
\frac{|w - \varphi(u)|^2}{\gamma d\big((u, \varphi(u)), W\big)^2}\leq \frac{\rho^2}{\gamma(L - \rho)^2} < 1/3;
$$

consequently,

$$
P\Big(\frac{|w - \varphi(u)|^2}{\gamma d\big((u,\varphi(u)), W\big)^2}\Big) = 1, \quad\text{thus \,\, $\psi(x) = 1$}.
$$

We conclude that $\psi\equiv 1$ on  $G_\rho(Z, W)$, if only $\rho$ satisfies (3.9) and (3.11).

\medskip
Now we will check that $\psi: \Real^n\longrightarrow [0, 1]$ is $\Lambda^0_p(W)$ regular. Since $\psi = \lambda$ on $H$ and $\lambda$ is $\Lambda^0_p(W)$-regular, due to induction hypothesis, it suffices to check $\Lambda^0_p(W)$-regularity on $\Real^n\setminus (\overline{H}\cup W)$. Moreover, since $\supp$$\psi\subset G_\eta(Z, W)$ and $\psi\equiv 1$ on $G_\rho(Z, W)$, it suffices to check $\Lambda^0_p(W)$-regularity, assuming that
\begin{equation}
x\in \Real^n\setminus (\overline{H}\cup W),\,  d(x, Z) < \eta d(x, W),
\end{equation}
$\phantom{.......................................} \text{and} \,\, d(x, Z') > \rho'd(x, W).$

\medskip
For $x = (u, w)\in T\times \Real^{n-m}$ satisfying (3.12), we have by (3.3) and (3.5) that
$$
d(x, W) \leq d\big((u, \varphi(u)), W\big) + |x - (u, \varphi(u))|
$$
$$
\leq d\big((u, \varphi(u)), W\big) + (1/L)d(x, Z) < d\big((u, \varphi(u)), W\big) + (\eta/L)d(x, W);
$$
consequently,
\medskip
\begin{equation}
d(x, W) < \frac{L}{L - \eta}d\big((u, \varphi(u)), W).
\end{equation}

Since by (3.3), (3.12) and (3.13)
$$
\frac{|w - \varphi(u)|}{d\big((u, \varphi(u)), W\big)} \leq \frac{(1/L) d(x, Z)}{\big(1 - (\eta/L)\big)d(x, W)} < \frac{\eta}{L - \eta},
$$
and
$$
\frac{|w - \varphi(u)|^2}{d\big((u, \varphi(u)), W\big)^2} = \sum_{j = m+1}^n\Big[\frac{x_j - \varphi_j(u)}{d\big((u, \varphi(u)), W\big)}\Big]^2,
$$
where $\varphi = (\varphi_{m+1},\dots, \varphi_n)$, it follows from Lemmas 3.4, 3.3 and 3.5 consecutively applied, that it suffices to check that every function $f_j(x) = f_j(u, w) := \varphi_j(u)$ and the function $g(x) = g(u, w) :=d\big((u, \varphi(u)), W\big)$ are $\Lambda^1_p(W)$-regular\footnote{$x_j$ was omitted as obviously $\Lambda^1_p(W)$-regular} on the set (3.12).

\medskip
To this end, take any $\alpha\in \Nat^n\setminus\{0\}$ such that $|\alpha|\leq p$. Then for any $x = (u, w)$ satisfying (3.12),

$$
|D^\alpha f_j(x)|\leq C d(u, \partial T)^{1 - |\alpha|} \leq C L^{1-|\alpha|}d\big((u, \varphi(u)),\partial Z)^{1-|\alpha|}
$$
$$
\leq C L^{1-|\alpha|}\max\Big[d\big((u,\varphi(u)), Z'\big)^{1-|\alpha|}, d\big((u, \varphi(u)), (\partial Z)\cap W\big)^{1-|\alpha|}\Big],
$$
where $C$ is a positive constant.

\smallskip
On the other hand, by (3.3) and (3.12),
$$
d\big((u, \varphi(u)), Z'\big)\geq d(x, Z') - |w - \varphi(u)|\geq d(x, Z') - (1/L)d(x, Z)
$$
$$
\geq d(x, Z') - (\delta/L)d(x, \partial Z) \geq d(x, Z') - (\delta/L)d(x, Z') \geq \rho'\Big(1 - \frac{\delta}{L}\Big)d(x, W),
$$
and, by (3.13),
$$
d\big((u, \varphi(u)), (\partial Z)\cap W\big)\geq d\big((u, \varphi(u)), W\big) > \Big(1 - \frac{\eta}{L}\Big)d(x, W).
$$
It follows that $|D^\alpha f_j(x)|\leq\tilde{C}d(x, W)^{1- |\alpha|}$, where $\tilde{C}$ is a positive constant. The same estimate holds for $g$, which ends the proof that $\psi$ is $\Lambda^0_p(W)$-regular.

\medskip
To finish the proof of Proposition 3.9, it remains to consider the case $m = n$; i.e. $Z$ is an open semialgebraic subset of $\Real^n\setminus W$.
Let $\eta > 0$. By induction hypothesis applied to $Z':= \partial Z\setminus W$, there exists a $\Lambda^0_p(W)$-regular function $\lambda: \Real^n\setminus W\longrightarrow [0, 1]$ such that $\supp$$\lambda\subset G_\eta(Z', W)$ and $\lambda\equiv 1$ on $G_\rho(Z', W)$, for some $\rho\in (0, \eta)$. Now, we define
$$
\psi(x) := \begin{cases} 1, &\text{when $x\in Z$}\\
                  \lambda(x), &\text{when $x\in \big[(\Real^n\setminus W)\setminus \overline{Z}\big]\cup G_\rho(Z', W)$.} \end{cases}
$$

\medskip
Clearly, $\psi: \Real^n\setminus W\longrightarrow [0, 1]$ is $\Lambda^0_p(W)$-regular, \, $\supp$$\psi\subset G_\eta(Z, W)$ \, and \, $\psi\equiv 1$ on $G_\rho(Z, W)$.
\end{proof}

\medskip
\begin{proof}[Proof of Theorem 3.2]

By Proposition 3.9, for each $i\in\{1,\dots, s\}$, there exists a $\Lambda^0_p(W)$-regular function $\psi_i: \Real^n\setminus W\longrightarrow [0, 1]$ such that $\supp$$\psi_i\subset G_\eta(U_i, Z)$ and $\psi_i \equiv 1$ on $G_{\rho_i}(U_i, W)$, for some $\rho_i\in (0, \eta)$. By Lemmas 3.4 and 3.3, the functions
$$
\omega_i := \frac{\psi_i}{\psi_1 + \dots + \psi_s} \quad (i\in \{1,\dots, s\}).
$$
are the required partition of unity.
\end{proof}

\bigskip
\section{Proof of Theorem 1.3}

\medskip
 By Theorem 2.6, applied to $g$ and the set $W$, we obtain a $\Lambda_p$-regular stratification \linebreak $\Real^n\setminus W = C_1\cup \dots\cup C_s$ of the set $\Real^n\setminus W$, such that, for each $i\in \{1,\dots, s\}$, the stratum $C_i$, after an orthogonal linear change of coordinates in $\Real^n$, is a $\Lambda_p$-regular cell in $\Real^n$ and if $C_i$ is open then $g|C_i$ is $\Lambda_p$-regular, while in the case $\dim C_i = m < n$, when $C_i$ is of the form
\begin{equation}
C_i = \{(u, w)\in D_i\times \Real^{n-m}:w = \varphi_i(u)\},
\end{equation}
$\phantom{...........}$where $D_i$ is open in $\Real^m$ and $\varphi_i: D_i\longrightarrow \Real^{n-m}$ is $\Lambda_p$-regular,
then
\begin{equation}
\text{the mapping $D_i\ni u\longmapsto g(u, \varphi_i(u))\in \Real^d$ is $\Lambda_p$-regular.}
\end{equation}
Additionally, without any loss in generality, we can assume that
\begin{equation}
\dim C_1 \leq \dim C_2 \leq \dots \leq \dim C_s.
\end{equation}
If $C_i$ is of the form (4.1) and $M_i$ is a Lipschitz constant of $\varphi_i$, then we put \linebreak $L_i := 1/\sqrt{1 + M_i^2}$. If $\dim C_i = n$, we put $L_i := 1$. Let $A$ be a Lipschitz constant of $g$.

To simplify the notation, we will write in this section $G_\eta(Z)$ in the place of $G_\eta(Z, W)$, for any $\eta > 0$ and any semialgebraic subset $Z\subset \Real^n\setminus W$. This will not lead to a confusion because the set $W$ is fixed in this section.

\medskip
Given any $\kappa >0$ as in Theorem 1.3, fix any  $\theta\in (0, 1)$ so small that $A(\theta/L_i) <\kappa$, for each $i\in\{1,\dots, s\}$. We define by induction on $i\in\{1,\dots, s\}$, a sequence of semialgebraic sets $Z_1\subset C_1, \dots, Z_s\subset C_s$, and two sequences of positive numbers \newline $\eta_s < \delta_s < \eta_{s-1} < \delta_{s-1} < \dots < \eta_1 < \delta_1 < \theta$ such that

\medskip
\begin{equation}
d(x, C_1\cup\dots\cup C_i) < \eta_id(x, W) \Longrightarrow \phantom{..........................................}
\end{equation}
$\phantom{.............................}x\in G_{\eta_i}(Z_i)\cup G_{\eta_i}(G_{\eta_{i-1}}(Z_{i-1}))\cup\dots\cup G_{\eta_i}(G_{\eta_{i-1}}(\dots(G_{\eta_1}(Z_1))\dots ));$

\begin{equation}
\text{there exists a $\Lambda^1_p(W)$-regular function $f_i: G_{\delta_i}(Z_i)\longrightarrow \Real\phantom{.........}$}
\end{equation}
$\phantom{.....................}\text{such that} \quad \forall \, x\in G_{\delta_i}(Z_i): \, |f_i(x) - g(x)|\leq A(\delta_i/L_i) d(x, W)$;

\begin{equation}
\text{for every  $j\in\{1,\dots, i\}$ there exists $\varepsilon_{ij} \in (0, \delta_j)$ such that $\phantom{........}$}
\end{equation}
$\phantom{.....................} G_{\eta_i}(G_{\eta_{i-1}}(\dots(G_{\eta_j}(Z_j))\dots ))\subset G_{\varepsilon_{ij}}(Z_j).$

\medskip
To begin the inductive definition, we put $Z_1 := C_1$. Since $C_1$ is the first stratum, its boundary $\partial Z_1 := \overline{Z_1}\setminus Z_1$ is contained in $W$. Take any $\delta_1 < \min\{\theta, L_1\}$. Then, for each $x\in G_{\delta_1}(Z_1)$, we have
$$
d(x, Z_1)< \delta_1 d(x, W) \leq \delta_1 d(x, \partial Z_1);
$$
hence, by (3.4),  $x = (u, w)\in D_1\times \Real^{n- m_1}$, \, where \, $m_1 = \dim C_1$. Therefore, we can define
$$
f_1(x) = f_1(u, w) := g(u, \varphi_1(u)).
$$
Then, by (3.3),
$$
|f_1(x) - g(x)| = |g(u, \varphi_1(u)) - g(u, w)|\leq A|w - \varphi_1(u)|
$$
$$
\leq AL_1^{-1}d(x, C_1)\leq A(\delta_1/L_1)d(x, W).
$$
To check that $f_1$ is $\Lambda^1_p(W)$-regular, we take any $\alpha\in \Nat^n\setminus\{0\}$ such that $|\alpha|\leq p$. Then we have by (4.2)
$$
|D^\alpha f_1(x)|\leq B_1d(u, \partial D_1)^{1-|\alpha|}\leq B_1(L_1)^{1-|\alpha|}d\big((u, \varphi_1(u)), \partial Z_1\big)^{1-|\alpha|}
$$
$$
\leq B_1(L_1)^{1-|\alpha|}\Big[d(x, \partial Z_1) - |w - \varphi_1(u)|\Big]^{1-|\alpha|}
$$
$$
\leq B_1(L_1)^{1-|\alpha|}\Big[d(x, \partial Z_1) - L_1^{-1}d(x, Z_1)\Big]^{1-|\alpha|}
$$
$$
\leq B_1(L_1)^{1-|\alpha|}\Big(1 - \frac{\delta_1}{L_1}\Big)^{1-|\alpha|}d(x, W)^{1 - |\alpha|},
$$
where $B_1$ is a positive constant. Fix any $\eta_1\in (0, \delta_1)$ and put $\varepsilon_{11} :=\eta_1$.

\medskip
To define $Z_{i+1}$, where $i < s$, observe that, due to (4.3) and (4.4),
$$
(\partial C_{i+1})\setminus W \subset C_1\cup\dots\cup C_i\subset G_{\eta_i}(Z_i)\cup\dots\cup G_{\eta_i}(G_{\eta_{i-1}}(\dots(G_{\eta_1}(Z_1))\dots))
$$.
Put
$$
Z_{i+1} := C_{i+1}\setminus\Big[G_{\eta_i}(Z_i)\cup\dots\cup G_{\eta_i}(G_{\eta_{i-1}}(\dots(G_{\eta_1}(Z_1))\dots))\Big].
$$

\medskip
By (4.4)
\begin{equation}
\forall \, z\in Z_{i+1}: \,\, d(z, \partial C_{i+1}) = \min\big\{d(z, (\partial C_{i+1})\setminus W), d(z, (\partial C_{i+1})\cap W)\big\}\geq
\end{equation}
$$
\phantom{....}\min\big\{d(z, C_1\cup\dots\cup C_i), d(z, W)\big\} \geq \eta_id(z, W).
$$

\medskip
Assume first that $\dim C_{i+1} < n$. Then we choose $\delta_{i+1}\in (0, \eta_i/(1 + \eta_i))$ in such a way that
\begin{equation}
\frac{\delta_{i+1}}{\eta_i - \delta_{i+1}(\eta_i + 1)} < L_{i+1}.
\end{equation}

\medskip
We will now check that
\begin{equation}
G_{\delta_{i+1}}(Z_{i+1}) \subset D_{i+1}\times \Real^{n- m_{i+1}}.
\end{equation}
Indeed, take any $x\in G_{\delta_{i+1}}(Z_{i+1})$. There exists $z\in Z_{i+1}$ such that $|x - z|< \delta_{i+1}d(x, W)$.
By (4.7), we have
$$
|x - z| < \delta_{i+1}\big(|x - z| + d(z, W)\big)\leq
\leq \delta_{i+1}|x - z| + \frac{\delta_{i+1}}{\eta_i}d(z, \partial C_{i+1})\leq
$$
$$
\big(\delta_{i+1} + \frac{\delta_{i+1}}{\eta_i}\big)|x - z| + \frac{\delta_{i+1}}{\eta_i}d(x, \partial C_{i+1})
$$
hence,
$$
d(x, C_{i+1}) \leq |x - z| < \frac{\delta_{i+1}}{\eta_i - \delta_{i+1}(\eta_i + 1)}d(x, \partial C_{i+1})
$$
which, in view of (4.8) and (3.4), implies that $x\in D_{i+1}\times \Real^{n - m_{i+1}}$.

\medskip
In view of (4.9), the following definition of $f_{i+1}: G_{\delta_{i+1}}(Z_{i+1})\longrightarrow \Real$ is possible
$$
f_{i+1}(x) = f_{i+1}(u, w) := g(u,\varphi_{i+1}(u)).
$$
Then, we have
$$
|f_{i+1}(x) - g(x)|\leq A|w - \varphi_{i+1}(u)|\leq (A/L_{i+1})d(x, C_{i+1})
$$
$$
\leq (A/L_{i+1})d(x, Z_{i+1})< A(\delta_{i+1}/L_{i+1})d(x, W).
$$

\smallskip
Now we want to check that $f_{i+1}$ is $\Lambda^1_p(W)$-regular. Let $\alpha\in \Nat^n\setminus\{0\}$ such that $|\alpha|\leq p$ and let $x\in G_{\delta_{i+1}}(Z_{i+1})$. Then there exists $z\in Z_{i+1}$ such that $|x - z| < \delta_{i+1}d(x, W)$. By (4.2) and (4.7),  we get
$$
|D^\alpha f_{i+1}(x)|\leq B_{i+1}d(u, \partial D_{i+1})^{1 - |\alpha|}
$$
$$
\leq B_{i+1}(L_{i+1})^{1-|\alpha|}d\big((u, \varphi_{i+1}(u)), \partial C_{i+1}\big)^{1-|\alpha|}
$$
$$
\leq B_{i+1}(L_{i+1})^{1 - |\alpha|}\Big[d(z, \partial C_{i+1}) - |(u, \varphi_{i+1}(u)) - z|\Big]^{1 - |\alpha|}
$$
$$
\leq B_{i+1}(L_{i+1})^{1 - |\alpha|}\Big[\eta_id(z, W) - |(u, \varphi_{i+1}(u)) - z|\Big]^{1 - |\alpha|}
$$
$$
\leq B_{i+1}(L_{i+1})^{1 - |\alpha|}\Big[\eta_i d(x, W) - \eta_i|x - z| - |x - z| - |(u, \varphi_{i+1}(u)) - x|\Big]^{1 - |\alpha|}
$$
$$
\leq B_{i+1}(L_{i+1})^{1 - |\alpha|}\Big[\eta_i d(x, W) - (\eta_i + 1)|x - z| - |w - \varphi_{i+1}(u)|\Big]^{1 - |\alpha|}
$$
$$
\leq B_{i+1}(L_{i+1})^{1 - |\alpha|}\Big[\eta_i d(x, W) - (\eta_i + 1)\delta_{i+1}d(x, W) - (1/L_{i+1})d(x, C_{i+1})\Big]^{1 - |\alpha|}
$$
$$
\leq B_{i+1}(L_{i+1})^{1 - |\alpha|}\Big[\eta_i d(x, W) - (\eta_i + 1)\delta_{i+1}d(x, W) - (1/L_{i+1})d(x, Z_{i+1})\Big]^{1 - |\alpha|}
$$
$$
\leq B_{i+1}(L_{i+1})^{1 - |\alpha|}\Big[\eta_i d(x, W) - (\eta_i + 1)\delta_{i+1}d(x, W) - (\delta_{i+1}/L_{i+1})d(x, W)\Big]^{1 - |\alpha|}
$$
$$
= B_{i+1}[\eta_i - \delta_{i+1}(\eta_i +1)]^{1- |\alpha|}\Big(L_{i+1} - \frac{\delta_{i+1}}{\eta_i - \delta_{i+1}(\eta_i + 1)}\Big)^{1-|\alpha|}d(x, W)^{1-|\alpha|},
$$
where $B_{i+1}$ is a positive constant.

\medskip
Assume now that $\dim C_{i+1} = n$. Then we fix any $\delta_{i+1} \in (0, \eta_i/(1+\eta_i))$. \linebreak If $x\in G_{\delta_{i+1}}(Z_{i+1})$, then there exists $z\in Z_{i+1}$ such that $|x - z|< \delta_{i+1}d(x, W)$, and by (4.7)
$$
|x - z| < \delta_{i+1}|x - z| + \delta_{i+1}d(z, W)\leq \delta_{i+1}|x - z| + \frac{\delta_{i+1}}{\eta_i}d(z, \partial C_{i+1});
$$
thus,
$$
|x - z| < \frac{\delta_{i+1}}{(1 - \delta_{i+1})\eta_i}d(z, \partial C_{i+1}) < d(z, \partial C_{i+1}).
$$
It follows that
\begin{equation}
G_{\delta_{i+1}}(Z_{i+1})\subset C_{i+1};
\end{equation}
hence we can define $f_{i+1}$ as the restriction $g|C_{i+1}$ of $g$ to $C_{i+1}$. Clearly, $f_{i+1}$ is $\varLambda_p^1(W)$-regular and corresponding condition (4.5) is trivially satisfied, since then the left-hand side is identically zero.

\medskip
Now we will need to specify $\eta_{i+1}\in (0, \delta_{i+1})$. To this end we will need the properties of the operation $G_\delta$ expressed in Lemma 3.7.  We take any $\eta_{i+1}\in (0, \delta_{i+1})$ so small that
$$
\varepsilon_{i+1,j}:= \varepsilon_{ij} + \eta_{i+1} + \eta_{i+1}\varepsilon_{ij} < \delta_j \qquad (j\in\{1,\dots,i\})
$$
and $\varepsilon_{i+1,i+1}:=\eta_{i+1}$, both in the case $\dim C_{i+1} < n$ as well as when $\dim C_{i+1} = n$. This choice ensures (4.6) for $i+1$ in the place of $i$, according to Lemma 3.7.

\medskip
Now we will check the property (4.4) for $i+1$ in the place of $i$.

\medskip
Let  $d(x, C_1\cup\dots\cup C_i\cup C_{i+1}) < \eta_{i+1}d(x, W)$.

\medskip
If $d(x, C_1\cup\dots\cup C_i\cup C_{i+1}) = d(x, C_1\cup\dots\cup C_i)$, then by (4.4), \linebreak $x\in G_{\eta_i}(Z_i)\cup G_{\eta_i}(G_{\eta_{i-1}}(Z_{i-1}))\cup\dots\cup G_{\eta_i}(G_{\eta_{i-1}}(\dots(G_{\eta_1}(Z_1))\dots ))$.

\medskip
If $d(x, C_1\cup\dots\cup C_i\cup C_{i+1}) = d(x, Z_{i+1})$, then certainly $d(x, Z_{i+1}) < \eta_{i+1}d(x, W)$; thus, $x\in G_{\eta_{i+1}}(Z_{i+1})$.

\medskip
It remains the case, when $d(x, C_1\cup\dots\cup C_i\cup C_{i+1}) = $
$$
 d\Big(x, C_{i+1}\cap \big[G_{\eta_i}(Z_i)\cup G_{\eta_i}(G_{\eta_{i-1}}(Z_{i-1}))\cup\dots\cup G_{\eta_i}(G_{\eta_{i-1}}(\dots(G_{\eta_1}(Z_1))\dots
 ))\big]\Big).
$$

Then
$$
d\Big(x, G_{\eta_i}(Z_i)\cup G_{\eta_i}(G_{\eta_{i-1}}(Z_{i-1}))\cup\dots\cup G_{\eta_i}(G_{\eta_{i-1}}(\dots(G_{\eta_1}(Z_1))\dots
 ))\Big)
$$
$\phantom{.........................................................................................}< \eta_{i+1}d(x, W)$;
thus,
$$
x\in G_{\eta_{i+1}}(G_{\eta_i}(Z_i))\cup G_{\eta_{i+1}}(G_{\eta_i}(G_{\eta_{i-1}}(Z_{i-1})))\cup\dots\phantom{.........................}
$$
$$
\phantom{..........................................}\dots\cup G_{\eta_{i+1}}(G_{\eta_i}(G_{\eta_{i-1}}(\dots(G_{\eta_1}(Z_1))\dots
 ))).
$$

\bigskip
To finish the proof of Theorem 1.3, we put
$$
U_i:= G_{\eta_s}(\dots (G_{\eta_i}(Z_i))\dots),\quad \text{for $i\in \{1,\dots,s\}$},
$$
and choose $\eta > 0$ so small that $G_\eta(U_i)\subset G_{\delta_i}(Z_i)$, for each $i\in \{1,\dots, s\}$ (see (4.6) and Lemma 3.7).
It follows from (4.4) that $U_1,\dots, U_s$ is a covering of $\Real^n\setminus W$. We take now the partition of unity $\{\omega_i\}$ \,\, $(i\in \{1,\dots, s\})$ adapted to this covering and to $\eta$ according to Theorem 3.2. In virtue of Lemma 3.3, for each $i\in\{1,\dots, s\}$, the function $f_i\omega_i$ is $\varLambda_p^1(W)$-regular on $G_{\delta_i}(Z_i)$ and obviously extends by zero to a $\varLambda_p^1(W)$-regular function defined on $\Real^n\setminus W$ and, by (4.5), for each $x\in \Real^n\setminus W$,
$$
|f_i(x)\omega_i(x) - g(x)\omega_i(x)| \leq A(\delta_i/L_i)d(x, W)\omega_i(x) \leq A(\theta/L_i)d(x, W)\omega_i(x)< \kappa d(x, W)\omega_i(x) .
$$
Hence the function
$$
f := f_1\omega_1 + \dots + f_s\omega_s,
$$
is $\varLambda_p^1(W)$-regular on $\Real^n\setminus W$ and for each $x\in \Real^n\setminus W$
$$
|f(x) - g(x)|\leq \sum_{i=1}^s|f_i(x)\omega_i(x) - g(x)\omega_i(x)|\leq \sum_{i=1}^s\kappa d(x, W)\omega_i(x) = \kappa d(x, W),
$$
which ends the proof of Theorem 1.3.

\bigskip
\section{Two applications}

\bigskip
We give here two almost immediate consequences of Theorem 1.1. The first one is another proof of a theorem of Bierstone, Milman and Paw{\l}ucki (cf. [3, C.11]) that, given any positive integer $p$, any closed semialgebraic (or, more generally, definable in some o-minimal structure $\mathcal S$) subset $W$ of $\Real^n$ is the zero-set of some semialgebraic (respectively, definable in $\mathcal S$) $\mathcal C^p$-function defined on $\Real^n$. We will prove the following.

\begin{thm}
$\phantom{.....................................................................................................................................}$
Let $W$ be a closed semialgebraic subset of $\Real^n$ and let $p$ be a positive integer. Then there exists a semialgebraic function $h: \Real^n \longrightarrow [0, \infty)$ of class $\mathcal C^p$, which is Nash on $\Real^n\setminus W$ and such that $W = h^{-1}(0)$. Moreover, $h$ is equivalent to the $(p+1)$-th power of the distance function from the set $W$.
\end{thm}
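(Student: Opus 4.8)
The plan is to deduce Theorem 5.1 directly from Theorem 1.1. Applying Theorem 1.1 to $W$ and $p$ we obtain a Nash function $f\colon\Real^n\setminus W\longrightarrow(0,\infty)$ and constants $A,B>0$ satisfying $(1.1)$ and $(1.2)$, and we set
$$
h(x):=\begin{cases} f(x)^{p+1}, & x\in\Real^n\setminus W,\\ 0, & x\in W.\end{cases}
$$
Then $h$ is semialgebraic (its graph is a semialgebraic set), $h\ge 0$, the restriction $h|_{\Real^n\setminus W}$ is Nash and strictly positive, so $W=h^{-1}(0)$, and $(1.1)$ gives $A^{-(p+1)}d(x,W)^{p+1}\le h(x)\le A^{p+1}d(x,W)^{p+1}$ for every $x\in\Real^n$, i.e. $h$ is equivalent to the $(p+1)$-th power of the distance function from $W$. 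The only thing that needs work is that $h$ is of class $\mathcal C^p$ on all of $\Real^n$, the issue being across $W$.

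First I would record the correct decay of the derivatives of $h$ near $W$. By $(1.2)$ the function $f$ is $\Lambda^1_p(W)$-regular, and by $(1.1)$ we have $|f(x)|\le A\,d(x,W)$. Hence Lemma 3.3, applied inductively to the products $f\cdot f,\ f^2\cdot f,\dots$, shows that $f^{\,q}$ is $\Lambda^q_p(W)$-regular with $|f^{\,q}(x)|\le A^q d(x,W)^q$ for $1\le q\le p+1$. Taking $q=p+1$ we obtain a constant $M>0$ with
$$
|D^\beta h(x)|\le M\,d(x,W)^{\,p+1-|\beta|}\quad\text{for }x\in\Real^n\setminus W,\ 1\le|\beta|\le p,
$$
and also $0\le h(x)\le A^{p+1}d(x,W)^{p+1}$ for all $x$.

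Next, for each $\beta\in\Nat^n$ with $|\beta|\le p$ define $h_\beta\colon\Real^n\longrightarrow\Real$ by $h_\beta:=D^\beta h$ on $\Real^n\setminus W$ and $h_\beta:=0$ on $W$, so $h_0=h$. I would prove by induction on $|\beta|$ that each $h_\beta$ is continuous and that, for $|\beta|\le p-1$, one has $\partial h_\beta/\partial x_j=h_{\beta+e_j}$ for every $j$; iterating, this gives that all partial derivatives of $h$ up to order $p$ exist, coincide with the $h_\beta$, and are continuous, hence $h\in\mathcal C^p(\Real^n)$. On $\Real^n\setminus W$ both assertions are immediate since $h$ is there $\mathcal C^\infty$. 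At a point $a\in W$, continuity of $h_\beta$ follows from the displayed estimate ($|h_\beta(x)|\le M\,d(x,W)^{p+1-|\beta|}\to 0$ for $|\beta|\ge 1$, resp. $|h_0(x)|\le A^{p+1}d(x,W)^{p+1}\to 0$, as $x\to a$ with $x\notin W$, while $h_\beta\equiv 0$ on $W$); and since $h_\beta(a)=0$, to get $\partial h_\beta/\partial x_j(a)=0=h_{\beta+e_j}(a)$ it suffices to check $h_\beta(a+te_j)/t\to 0$, which is trivial if $a+te_j\in W$ and otherwise follows from $d(a+te_j,W)\le|t|$ together with $p+1-|\beta|\ge 2$ for $|\beta|\le p-1$ (resp. $p+1\ge 2$), so that $|h_\beta(a+te_j)|\le M|t|^{p+1-|\beta|}=o(|t|)$.

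The heart of the argument, and the only non-formal point, is this last step: $\mathcal C^p$-smoothness of $h$ across $W$. It is precisely here that the $\Lambda_p$-regularity built into Theorem 1.1 is used --- through Lemma 3.3 it furnishes the decay $|D^\beta h(x)|\le M\,d(x,W)^{p+1-|\beta|}$, which is exactly what makes the partial derivatives of $h$ extend continuously by zero over $W$; the classical Calder\'on-Zygmund theorem on the open set $\Real^n\setminus W$ would not by itself produce a function whose $(p+1)$-th power glues smoothly across the boundary.
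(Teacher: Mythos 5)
Your proof is correct and takes essentially the same approach as the paper: set $h := f^{p+1}$ for $f$ from Theorem 1.1, derive the derivative estimate $|D^\alpha h(x)| \leq C\,d(x,W)^{p+1-|\alpha|}$ via the Leibniz rule (packaged as Lemma 3.3 in your version, used directly as a multinomial expansion in the paper's), and conclude $\mathcal C^p$-smoothness across $W$ from the resulting vanishing of all derivatives up to order $p$. The only difference is that the paper invokes the Hestenes Lemma (Lemma 5.2) to perform the gluing, whereas you reprove that gluing fact inline with the standard difference-quotient argument.
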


\medskip
In the proof we will use the following elementary Hestenes Lemma (cf. [15, Lemme 4.3]).

\begin{lem}
$\phantom{...............................................................................................................................}$
Let $W$ be a closed subset of an open subset $\varOmega$ of $\Real ^n$. If $h: \varOmega\setminus W\longrightarrow \Real$ is a $\mathcal C^p$-function and $$\lim_{x \to a}D^\alpha h(x) = 0,$$ for each $a\in W\cap\overline{\varOmega\setminus W}$ and each $\alpha\in \Nat^n$ such that $|\alpha|\leq p$, then $h$ extends by zero to a $\mathcal C^p$-function on $\varOmega$, $p$-flat on $W$.
\end{lem}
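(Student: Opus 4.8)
The plan is to argue by induction on $p$. Let $\tilde h:\varOmega\to\Real$ be the extension of $h$ by zero, i.e. $\tilde h=h$ on $\varOmega\setminus W$ and $\tilde h=0$ on $W$; I will show that $\tilde h\in\mathcal C^p(\varOmega)$ and $D^\alpha\tilde h\equiv 0$ on $W$ for every $\alpha\in\Nat^n$ with $|\alpha|\le p$. For the base case $p=0$: at points of $\varOmega\setminus W$ the function $\tilde h$ equals the continuous function $h$, at interior points of $W$ it is locally the zero function, and at a point $a\in W\cap\overline{\varOmega\setminus W}$ continuity follows from the hypothesis $\lim_{x\to a}h(x)=0=\tilde h(a)$ together with $\tilde h\equiv 0$ on $W$; the flatness assertion for $p=0$ is just $\tilde h|_W=0$, true by construction.

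For the inductive step, assume the statement for $p-1$ and let $h$ be as in the lemma with $p\ge 1$. For each $j$, the partial derivative $\partial h/\partial x_j$ is $\mathcal C^{p-1}$ on $\varOmega\setminus W$ and, writing $e_j$ for the $j$-th unit multi-index, satisfies $\lim_{x\to a}D^\beta(\partial h/\partial x_j)(x)=\lim_{x\to a}D^{\beta+e_j}h(x)=0$ for all $|\beta|\le p-1$ and all $a\in W\cap\overline{\varOmega\setminus W}$; hence by the induction hypothesis it extends by zero to a function $g_j\in\mathcal C^{p-1}(\varOmega)$ that is $(p-1)$-flat on $W$, and by the base case $h$ extends by zero to $\tilde h\in\mathcal C^0(\varOmega)$. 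It then suffices to prove that $\tilde h\in\mathcal C^1(\varOmega)$ with $\partial\tilde h/\partial x_j=g_j$ for every $j$: once this is known, $\tilde h$ has $\mathcal C^{p-1}$ first partial derivatives, so $\tilde h\in\mathcal C^p(\varOmega)$, and for $0<|\alpha|\le p$ one has $D^\alpha\tilde h=D^{\alpha-e_j}g_j\equiv 0$ on $W$ (choosing $j$ with $\alpha_j\ge 1$), while $\tilde h|_W=0$, which is the required $p$-flatness.

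The $\mathcal C^1$-claim is clear on $\varOmega\setminus W$, so fix $a\in W$; I claim $\tilde h$ is differentiable at $a$ with $D\tilde h(a)=0=(g_1(a),\dots,g_n(a))$, i.e. $|\tilde h(x)|=o(|x-a|)$. For $x\in W$ this is immediate since $\tilde h(x)=0$. For $x\in\varOmega\setminus W$ with $|x-a|$ small enough that $\overline{B}(a,|x-a|)\subset\varOmega$, let $x'$ be the point of the nonempty compact set $[a,x]\cap W$ nearest to $x$; then $(x',x]\subset\varOmega\setminus W$ and $x'\in W\cap\overline{\varOmega\setminus W}$. Parametrising $\gamma(s)=x'+s(x-x')$, the function $s\mapsto h(\gamma(s))$ is $\mathcal C^1$ on $(0,1]$ with derivative $\sum_j g_j(\gamma(s))(x_j-x'_j)$, and letting $t\to 0^+$ in $h(\gamma(1))-h(\gamma(t))=\int_t^1\sum_j g_j(\gamma(s))(x_j-x'_j)\,ds$ — the integrand is bounded since each $g_j$ is continuous on $\varOmega$, and $h(\gamma(t))\to\tilde h(x')=0$ — gives $h(x)=\int_0^1\sum_j g_j(\gamma(s))(x_j-x'_j)\,ds$. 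Hence $|\tilde h(x)|\le\sqrt n\,|x-x'|\,\sup_{[x',x]}\max_j|g_j|\le\sqrt n\,|x-a|\,\sup_{\overline B(a,|x-a|)\cap\varOmega}\max_j|g_j|$, and this supremum tends to $0$ as $x\to a$ because each $g_j$ is continuous with $g_j(a)=0$. This proves the claim and completes the induction.

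I expect the only real obstacle to be exactly this differentiability check at points of $W\cap\overline{\varOmega\setminus W}$: the fundamental theorem of calculus cannot be applied on $[a,x]$ directly since that segment may meet $W$, where $h$ is undefined, so the estimate has to be run on the terminal subsegment $(x',x]\subset\varOmega\setminus W$, and then the flatness of $\tilde h$ at $x'$ (to discard the endpoint term) and the vanishing of the $g_j$ on $W$ (to make the supremum small) are what make it work. Everything else is routine multi-index bookkeeping.
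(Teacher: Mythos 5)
Your proof is correct. Note that the paper itself does not prove this lemma at all — it is quoted as the ``elementary Hestenes Lemma'' with a citation to Tougeron [15, Lemme 4.3] — so there is no in-paper argument to compare against; your induction on $p$, reducing everything to the $\mathcal C^1$ statement and establishing differentiability at $a\in W$ by integrating $\sum_j g_j$ along the terminal subsegment $(x',x]\subset\varOmega\setminus W$ (with the endpoint term killed by continuity of $\tilde h$ at $x'$ and the supremum killed by $g_j|_W=0$), is exactly the standard proof of that classical result, and all the delicate points (compactness of $[a,x]\cap W$, passing to the limit $t\to 0^+$ in the fundamental theorem of calculus, recovering $\mathcal C^1$ from continuity of the $g_j$) are handled correctly.
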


\begin{proof}[Proof of Theorem 5.1]

\medskip
By Theorem 1.1 there exists a semialgebraic $\Lambda^1_p(W)$-regular Nash function $f:\Real^n\setminus W \longrightarrow (0, \infty)$ equivalent to the function $\Real^n\setminus W\ni x \longmapsto d(x, W)$. When we put $h:=f^{p+1}$, we have, for any $\alpha\in \Nat^n$ such that $|\alpha|\leq p$ and any $x\in \varOmega\setminus W$
\begin{equation}
D^\alpha h(x) = \sum_{\beta_1 +\dots + \beta_{p+1} = \alpha}\frac{\alpha!}{\beta_1!\dots\beta_{p+1}!}D^{\beta_1}f(x)\dots D^{\beta_{p+1}}f(x).
\end{equation}
It follows that
$$
|D^\alpha h(x)|\leq \sum_{\beta_1 +\dots + \beta_{p+1} = \alpha}C\big(d(x, W)\big)^{1-|\beta_1|}\dots \big(d(x, W)\big)^{1-|\beta_{p+1}|}
$$
$$
\leq \tilde{C}\big(d(x, W)\big)^{p+1-|\alpha|}, \quad\text{where $C$ and $\tilde{C}$ are positive constants,}
$$
which implies that $\lim_{x\to a}D^\alpha h(x) = 0$, for each $a\in W\cap\overline{\Real^n\setminus W}$.
\end{proof}

\bigskip
Our second application concerns approximation of semialgebraic subsets by Nash compact hypersurfaces in the Hausdorff metric. To formulate the result let us denote by $\mathcal K_n$ the set of all nonempty compact subsets of $\Real^n$. Recall that the \emph{Hausdorff metric} on $\mathcal K_n$ is defined by the formula
$$
d_{\mathcal H}(A, B) :=\max\{\sup_{x\in A}d(x, B), \sup_{y\in B}d(y, A)\}.
$$

\medskip
\begin{thm}
$\phantom{...........................................................................................................................................}$
Let $W$ be a non-empty, compact, nowhere dense semialgebraic subset of $\Real^n$. Then there exists a semialgebraic family of Nash compact hypersurfaces $\{H_t\}$ \, $(0 < t < \theta)$ such that
$$
\lim_{t\to 0}d_{\mathcal H}(H_t, W) = 0.
$$
\end{thm}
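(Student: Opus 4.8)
\medskip
The plan is to take $H_t$ to be a level set of the Nash function produced by Theorem 1.1. Fix any $p$ (say $p=1$) and let $f:\Real^n\setminus W\longrightarrow(0,\infty)$ be a Nash function with $A^{-1}d(x,W)\le f(x)\le Ad(x,W)$ for every $x\in\Real^n\setminus W$; extend it to $\bar f:\Real^n\longrightarrow[0,\infty)$ by setting $\bar f\equiv 0$ on $W$, which is continuous because $f$ is equivalent to the distance function. For $t>0$ put $H_t:=\bar f^{-1}(t)=\{x\in\Real^n\setminus W:f(x)=t\}$. Since $f$ is semialgebraic, the set $\{(t,x):0<t<\theta,\ f(x)=t\}$ is semialgebraic, so $\{H_t\}_{0<t<\theta}$ is a semialgebraic family for any $\theta>0$; it remains to choose $\theta$ so that every $H_t$ is a non-empty compact Nash hypersurface and to prove $d_{\mathcal H}(H_t,W)\to 0$.

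\medskip
First I would dispose of the routine half of the convergence together with compactness. If $x\in H_t$ then $d(x,W)\le Af(x)=At$, so $H_t$ is contained in the bounded set $\{x:d(x,W)\le At\}$; moreover $H_t$ is closed in $\Real^n$, since a point of $\overline{H_t}\cap W$ would force $f\to 0$ along $H_t$, impossible for $t>0$. Hence each $H_t$ is compact and $\sup_{x\in H_t}d(x,W)\le At\to 0$. For smoothness: as $f$ is $\mathcal C^\infty$ on $\Real^n\setminus W$, Sard's theorem shows its set $V$ of critical values has Lebesgue measure zero in $\Real$, and, being semialgebraic, $V$ is finite. The set $\Real^n\setminus W$ has finitely many connected components $U_1,\dots,U_r$; on each $U_j$ the function $f$ is continuous with $\inf_{U_j}f=0$ (because $\partial U_j$ is a non-empty subset of $W$ — here one uses that $W$ is non-empty and nowhere dense — together with $f\le Ad(\cdot,W)$) and with $\sup_{U_j}f=:S_j>0$, so $(0,S_j)\subset f(U_j)$. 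Choosing $\theta>0$ below every finite $S_j$ and below every element of $V$, one gets that for each $t\in(0,\theta)$ the set $H_t$ is non-empty and $t$ is a regular value of $f$, so $H_t$ is a semialgebraic analytic submanifold of $\Real^n$ of dimension $n-1$, i.e. a Nash compact hypersurface.

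\medskip
The crux is the remaining estimate $\sup_{w\in W}d(w,H_t)\to 0$. Given $\eps>0$, I would cover the compact set $W$ by finitely many balls $B(w_1,\eps/2),\dots,B(w_q,\eps/2)$ with $w_k\in W$ and, using that $W$ is nowhere dense, pick $y_k\in B(w_k,\eps/4)\setminus W$; then $\bar f(y_k)>0$, and I set $\tau:=\min_k\bar f(y_k)>0$. For $0<t<\min(\theta,\tau)$ and any $w\in W$, choose $k$ with $w\in B(w_k,\eps/2)$, so $|w-y_k|<\eps$ while $\bar f(w)=0<t<\bar f(y_k)$; by the intermediate value theorem along the segment $[w,y_k]\subset B(w,\eps)$ there is a point $z$ with $\bar f(z)=t$, and $z\notin W$ since $t>0$, hence $z\in H_t$ and $d(w,H_t)<\eps$. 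Thus $\sup_{w\in W}d(w,H_t)\le\eps$ for all small $t$, which together with the previous estimate yields $\lim_{t\to0}d_{\mathcal H}(H_t,W)=0$.

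\medskip
The two points needing genuine care are the use of the semialgebraic form of Sard's theorem (to know that the critical values of $f$ form a finite set, so $\theta$ can really be chosen without them accumulating at $0$) and the uniformity in the last estimate, where non-emptiness of $H_t$ near $W$ forces the use of nowhere density of $W$ and the uniform $\tau$ relies on compactness of $W$; everything else is elementary.
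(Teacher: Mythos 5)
Your construction is exactly the paper's (level sets $H_t=f^{-1}(t)$ of the Nash regularization $f$ from Theorem 1.1, with $\theta$ chosen below the finitely many critical values of the semialgebraic function $f$, using Sard), and the easy direction $\sup_{x\in H_t}d(x,W)\le At\to0$ together with compactness of $H_t$ is handled identically. Where you genuinely diverge is in the proof of the other inclusion $\sup_{w\in W}d(w,H_t)\to0$. The paper introduces the modulus $\lambda(\eps):=\sup_{a\in W}d(a,\partial W^\eps)$, proves $\lambda(\eps)\to0$ by contradiction via the Curve Selection Lemma and the nowhere density of $W$, and then runs an intermediate value argument along the segment from $a\in W$ to a nearest point of $\partial W^\eps$. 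You instead bypass the Curve Selection Lemma entirely: you cover the compact $W$ by finitely many small balls centered on $W$, use nowhere density to pick a point $y_k\notin W$ in each, set $\tau:=\min_k\bar f(y_k)>0$ by finiteness, and apply the intermediate value theorem on $[w,y_k]$ for the continuous extension $\bar f$. This is a correct and more elementary route to the same estimate: compactness of $W$ replaces the o-minimal curve selection argument, at the mild cost of making $\tau$ (and hence the threshold on $t$) depend on a non-canonical choice of covering — harmless, since the conclusion is only a limit. Both approaches use nowhere density at the same place (to guarantee a nearby point where $f$ is bounded away from $0$), so neither is more general; yours is simply lower-tech.

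One small remark: your digression about the connected components $U_j$ of $\Real^n\setminus W$ and $\inf_{U_j}f=0$ is correct but redundant — non-emptiness of $H_t$ already falls out of your IVT argument in the last paragraph (since $W\neq\emptyset$), and the constraint $\theta<S_j$ is never actually needed. You could drop that paragraph and only keep the choice of $\theta$ below the finitely many critical values.
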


\begin{proof}

\medskip
Let $f:\Real^n\setminus W\longrightarrow (0, \infty)$ be a Nash function such that for some positive constant $A$
\begin{equation}
\forall \, x\in \Real^n\setminus W: \,\, A^{-1}d(x, W) \leq f(x) \leq Ad(x, W).
\end{equation}
By Sard's theorem, the function $f$ has only finitely many critical values; hence, the set $H_t := f^{-1}(t)$ is a Nash hypersurface in $\Real^n$, for all $t > 0$ sufficiently small. We will show that $H_t$ are compact and their Hausdorff limit is $W$, when $t$ tends to $0$. For any subset $X$ of $\Real^n$ and any $\eta > 0$ put
$$
X^\eta := \{x\in \Real^n: \, d(x, X) < \eta \}.
$$
To this end, it is enough to show that, for any positive $\eta$ there exists $\delta > 0$ such that for each $t\in (0, \delta)$
\begin{equation}
H_t \subset W^\eta \qquad\text{and}\qquad W\subset H_t^\eta.
\end{equation}

\medskip
As for the first inclusion (5.3), let $x\in H_t$. Then by (5.2), $A^{-1}d(x, W) \leq t$; hence, if $t < A^{-1}\eta$, then $x\in W^\eta$.

\medskip
As for the second inclusion (5.3), let us define function
\begin{equation}
\lambda(\varepsilon) := \sup_{a\in W}d(a, \partial W^\varepsilon), \quad\text{for any $\varepsilon > 0$.}
\end{equation}

\medskip
We claim that
\begin{equation}
\lim_{\varepsilon\to 0} \lambda(\varepsilon) = 0.
\end{equation}
Otherwise, by the Curve Selection Lemma, there should exist a semialgebraic continuous arc $\gamma: (0, \xi)\longrightarrow W$, where $\xi > 0$, such that
$$
\lim_{\varepsilon \to 0}\gamma(\varepsilon) = a, \,\text{for some $a\in W$, and}
$$
$$
d(\gamma(\varepsilon), \partial W^\varepsilon) > \mu, \, \text{for some $\mu > 0$ and each $\varepsilon\in (0, \xi)$.}
$$
The last would mean that
$$
B(\gamma(\varepsilon), \mu)\subset \{x\in \Real^n: \, d(x, W)\leq \varepsilon\}, \, \text{for each $\varepsilon\in (0, \xi)$.}
$$
But $B(\gamma(\varepsilon), \mu)\rightarrow B(a, \mu)$ and $\{x\in \Real^n: \, d(x, W)\leq \varepsilon\}\rightarrow W$, as $\varepsilon \rightarrow 0$; hence, $B(a, \mu)\subset W$, a contradiction with our assumption that $W$ is nowhere dense.

\bigskip
Let $\eta > 0$. By (5.5), there exists $\delta > 0$ such that $\lambda(\delta) < \eta$. It follows that then
$$
d(a, \partial W^\delta) < \eta, \quad\text{for each $a\in W$}.
$$
Fix any $a\in W$. There exists $z\in \partial W^\delta$ such that $|a - z| = d(a, \partial W^\delta)$. Observe that the line segment $[a, z]$ has its endpoints respectively in $W$ and in $\partial W^\delta$. By (5.2),
$$
f^{-1}[0, \delta/A)\subset W^\delta;
$$
hence, there exists $y\in f^{-1}(\delta/A)\cap [a, z]$. Then  $|a - y|\leq |a - z| < \eta$ and $y\in H^\eta_{\delta/A}$, which shows that $W\subset H^\eta_{\delta/A}$.
\end{proof}

\medskip
Theorem 5.3 is deepened and generalized in our separate article [6].

\bigskip
\section{Final remarks}

\bigskip
Most of the results of our article remain true in a more general context of o-minimal structures expanding the field of real numbers $\Real$ with the same proofs, where the term \emph{semialgebraic} should be replaced by \emph{definable} and \emph{a Nash mapping} - by \emph{a definable $\mathcal C^\infty$ mapping}.

\medskip
As for Theorem 1.1, however, it relies heavily on the Efroymson-Shiota approximation theorem. Theorem 1.4 was generalized by A. Fischer [5, Theorem 1.1]) to o-minimal structures admitting $\mathcal C^\infty$-cell decompositions in which the exponential function is definable (the case of non-polynomially bounded structures).
For the case of general polynomially bounded o-minimal structures admitting $\mathcal C^\infty$-cell decompositions a big progress has been made recently by the last author and Guillaume Valette, who proved
that in such structures the Efroymson-Shiota approximation theorem holds true for $p\leq 1$ ([16, Theorem 4.8]). In particular, this result allows us to prove the following generalization of Theorem 5.1.

\begin{thm}

Let $\mathcal D$ be a polynomially bounded o-minimal structure expanding $\Real$ which admits $\mathcal C^\infty$-cell decompositions. Then, for any closed $\mathcal D$-definable subset $W$ of $\Real^n$ and any positive integer $p$, there exists a $\mathcal D$-definable $\mathcal C^p$-function $h:\Real^n\longrightarrow [0, \infty)$ which is $\mathcal C^\infty$ on $\Real^n\setminus W$ and such that $W = h^{-1}(0)$.

\end{thm}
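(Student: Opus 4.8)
The plan is to follow the proof of Theorem 5.1 as closely as possible, but to replace the Nash function supplied there by Theorem 1.1 (unavailable here, since it rests on the full Efroymson--Shiota theorem) by a function coming from Theorem 1.3 --- which, as remarked after that theorem, is valid over $\mathcal D$ --- and then to improve its regularity off $W$ using the weak Efroymson--Shiota theorem [16, Theorem 4.8] for the parameter $1$. Since [16] only gives approximation in the definable $\mathcal C^1$-topology, the flatness on $W$ needed for a $\mathcal C^p$ extension will be regained at the end by raising the resulting function to a sufficiently high power. (We may assume $\emptyset\neq W\neq\Real^n$, the remaining cases being trivial.)

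First I would apply Theorem 1.3 over $\mathcal D$ to $g(x):=d(x,W)$ with $\kappa=1/2$; this yields a $\mathcal D$-definable function $f:\Real^n\setminus W\To\Real$ which is $\mathcal C^p$ on $\Real^n\setminus W$, satisfies $|f(x)-d(x,W)|\le\tfrac12 d(x,W)$, hence $\tfrac12 d(x,W)\le f(x)\le\tfrac32 d(x,W)$, so $f>0$ on $\Real^n\setminus W$ and $f$ is equivalent to $d(\cdot,W)$. Next I would apply [16, Theorem 4.8] with $G=\Real^n\setminus W$, $H=(0,\infty)$ and parameter $1$ to $f\in\mathcal N^1(G,H)$, using the $\mathcal D$-definable positive continuous control function $\varepsilon:=\tfrac12 f$; this produces a $\mathcal D$-definable $\mathcal C^\infty$ function $\widetilde f:\Real^n\setminus W\To(0,\infty)$ with $|\widetilde f-f|\le\tfrac12 f$, hence $\tfrac12 f\le\widetilde f\le\tfrac32 f$. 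In particular $\widetilde f>0$ on $\Real^n\setminus W$, $\widetilde f$ is equivalent to $d(\cdot,W)$ (so $\widetilde f\le\tfrac94 d(\cdot,W)$ everywhere), and $\widetilde f(x)\to 0$ as $x\to a\in W$.

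The remaining ingredient is a polynomial bound on the higher derivatives of $\widetilde f$ near $W$: since $\widetilde f$ is $\mathcal D$-definable and $\mathcal C^\infty$ on the definable open set $\Real^n\setminus W$, and $\mathcal D$ is polynomially bounded and admits $\mathcal C^\infty$-cell decompositions, a {\L}ojasiewicz-type inequality provides $N\in\Nat$ and $C>0$ such that $|D^\alpha\widetilde f(x)|\le C(1+|x|)^N d(x,W)^{-N}$ for all $x\in\Real^n\setminus W$ and all $\alpha\in\Nat^n$ with $1\le|\alpha|\le p$. I would then put $h:=\widetilde f^{\,M}$ with $M:=p(N+1)+1$, extended by $0$ on $W$. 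This $h$ is $\mathcal D$-definable (its graph is definable), $h\ge 0$, $h$ is $\mathcal C^\infty$ on $\Real^n\setminus W$, and $h^{-1}(0)=W$ because $\widetilde f>0$ off $W$ while $\widetilde f\to 0$ at $W$. For $|\alpha|\le p$ the Leibniz formula expresses $D^\alpha h$ as a finite sum of terms $c\,\widetilde f^{\,M-r}D^{\beta_1}\widetilde f\cdots D^{\beta_r}\widetilde f$ with $1\le r\le|\alpha|\le p$, $\beta_i\in\Nat^n\setminus\{0\}$ and $\beta_1+\dots+\beta_r=\alpha$; using $\widetilde f\le\tfrac94 d(\cdot,W)$ together with the bound above, on $\{x:\,d(x,W)\le 1,\ |x|\le R\}$ each such term is at most $C_R\,d(x,W)^{(M-r)-Nr}\le C_R\,d(x,W)^{(M-p)-Np}=C_R\,d(x,W)$. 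Hence $\lim_{x\to a}D^\alpha h(x)=0$ for every $a\in W\cap\overline{\Real^n\setminus W}$ and every $\alpha$ with $|\alpha|\le p$, so by the Hestenes Lemma 5.2 (with $\Omega=\Real^n$) the extension $h$ is of class $\mathcal C^p$ on $\Real^n$ and $p$-flat on $W$. This gives Theorem 6.1.

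The step I expect to be the main obstacle is the polynomial estimate in the third paragraph: controlling the derivatives of the $\mathcal C^1$-approximant $\widetilde f$ near $W$ with an exponent $N$ that does \emph{not} depend on the point of $W$, so that one power $M$ works simultaneously at all points --- this is what matters when $W$ is unbounded, and it is precisely where polynomial boundedness of $\mathcal D$ and the existence of $\mathcal C^\infty$-cell decompositions enter. Everything else is a routine transcription of the proof of Theorem 5.1.
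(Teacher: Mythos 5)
Your proof is correct, and it differs from the paper's only at the starting point. The paper invokes [16, Theorem 1.1] directly — this is precisely the statement that in a polynomially bounded o-minimal structure with $\mathcal C^\infty$-cell decompositions there is a $\mathcal D$-definable $\mathcal C^\infty$ function $f:\Real^n\setminus W\to[0,\infty)$ equivalent to $d(\cdot,W)$ — and then sets $h:=f^N$ for $N$ large, invoking the {\L}ojasiewicz inequality and the Hestenes lemma exactly as you do. You instead rebuild such an $f$ from scratch: Theorem~1.3 over $\mathcal D$ supplies a $\mathcal C^p$ function equivalent to $d(\cdot,W)$, and [16, Theorem 4.8] (the weak Efroymson--Shiota theorem, parameter $1$) upgrades it to $\mathcal C^\infty$ while keeping the equivalence. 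This is a perfectly valid substitute; in effect you re-derive the relevant special case of [16, Theorem 1.1] rather than citing it. One small inefficiency worth noting: the $\Lambda^1_p(W)$-regularity produced by Theorem~1.3 --- control of derivatives up to order $p$ --- is thrown away the moment you pass to a $\mathcal C^1$-approximant, and indeed you must recover all higher-derivative bounds from {\L}ojasiewicz afterwards; Theorem~1.3 with $p=1$ would have served equally well at that step. Your arithmetic with $M=p(N+1)+1$ and the Leibniz expansion is correct, and your worry about the global (unbounded-$W$) form of the {\L}ojasiewicz inequality, with the $(1+|x|)^N$ correction, is legitimate; the paper glosses over it in the same way, so you are not at a disadvantage there.
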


\begin{proof}
By [16, Theorem 1.1], there exists a $\mathcal D$-definable $\mathcal C^\infty$-function $f: \Real^n\setminus W\longrightarrow [0, \infty)$ such that
$$
\forall \, x\in \Real^n\setminus W: A^{-1}f(x) \leq d(x, W) \leq Af(x),
$$
where $A$ is a positive constant. If $N$ is an integer greater than $p$, then for any $\alpha\in \Nat^n\setminus \{0\}$ such that $|\alpha|\leq p$ the derivative $D^\alpha (f^N)$ is a linear combination with integral coefficients independent of $f$ of products
$$
f^{N-k}(D^{\beta_1}f)\dots (D^{\beta_k}f),
$$
where $k\in \{1,\dots, p\}$, \, $\beta_1,\dots, \beta_k\in \Nat^n\setminus \{0\}$ and $\beta_1 + \dots + \beta_k = \alpha$. It follows from the {\L}ojasiewicz inequality and the Hestenes lemma that for $N$ sufficiently big the function
$$
h(x) := \begin{cases} f^N(x), &\text{when $x\in \Real^n\setminus W$}\\ 0, &\text{when $x\in W$}\end{cases}
$$
is the required function.
\end{proof}

\end{document}